\documentclass[11pt,a4paper,leqno]{amsart}

\usepackage[english]{babel}
\usepackage[latin1]{inputenc}
\usepackage[hidelinks]{hyperref}

\usepackage{
amsmath,
latexsym,
amssymb,
math rsfs,
amsthm,
graphicx,
xcolor,
dsfont,
enumerate}

\usepackage{geometry}
\geometry{hmargin={2.2cm,2.2cm}}
\geometry{vmargin={5cm,3cm}}

\pagestyle{myheadings} \sloppy


\newtheorem{teo}{Theorem}[section]
\newtheorem{lem}{Lemma}[section]
\newtheorem{pro}{Proposition}[section]
\newtheorem{cor}{Corollary}[section]
\newtheorem{example}{Example}[section]

\theoremstyle{remark}
\newtheorem{rem}{Remark}[section]


\newcommand{\xv}{\mathbf{x}}
\newcommand{\yv}{\mathbf{y}}
\newcommand{\R}{\mathbb{R}}
\newcommand{\Z}{\mathbb{Z}}
\newcommand{\I}{\mathcal{I}}

\newcommand{\NN}{\mathcal{N}}
\newcommand{\ham}{H_{\alpha}}
\newcommand{\one}{\mathds{1}}
\newcommand{\dom}{\mathscr{D}}


\begin{document}
\title[ The action of Volterra integral operators with highly singular kernels]{ The action of Volterra integral operators with highly singular kernels on H\"older continuous, Lebesgue and Sobolev functions}

\author[R. Carlone]{Raffaele Carlone}
\address{Universit\`{a} ``Federico II'' di Napoli, Dipartimento di Matematica e Applicazioni ``R. Caccioppoli'', MSA, via Cinthia, I-80126, Napoli, Italy.}
\email{raffaele.carlone@unina.it}

\author[A. Fiorenza]{Alberto Fiorenza}
\address{Universit\`{a} ``Federico II'' di Napoli, Dipartimento di Architettura, via Monteoliveto, 3, I-80134, Napoli, Italy.}
\email{fiorenza@unina.it}

\author[L. Tentarelli]{Lorenzo Tentarelli}
\address{Universit\`{a} ``Federico II'' di Napoli, Dipartimento di Matematica e Applicazioni ``R. Caccioppoli'', MSA, via Cinthia, I-80126, Napoli, Italy.}
\email{lorenzo.tentarelli@unina.it}

\date{\today}
\begin{abstract}
For kernels $\nu$ which are positive and integrable we show that the operator $g\mapsto J_\nu g=\int_0^x \nu(x-s)g(s)ds$ on a finite time interval enjoys a regularizing effect when applied to H\"older continuous and Lebesgue functions and a ``contractive'' effect when applied to Sobolev functions. For H\"older continuous functions, we establish that the improvement of the regularity of the modulus of continuity is given by the integral of the kernel, namely by the factor $N(x)=\int_0^x \nu(s)ds$. For functions in Lebesgue spaces, we prove that an improvement always exists, and it can be expressed in terms of Orlicz integrability. Finally, for functions in Sobolev spaces, we show that the operator $J_\nu$ ``shrinks'' the norm of the argument by a factor that, as in the H\"older case, depends on the function $N$ (whereas no regularization result can be obtained).
\par
These results can be applied, for instance, to Abel kernels and to the Volterra function $\I(x) = \mu(x,0,-1) = \int_{0}^{\infty}x^{s-1}/\Gamma(s)\,ds$, the latter being relevant for instance in the analysis of the Schr\"odinger equation with concentrated nonlinearities in $\mathbb{R}^{2}$.
\end{abstract}


\maketitle
\begin{footnotesize}
 \emph{Keywords:} Volterra functions, singular kernels, Volterra integral equations, Sonine kernels, Orlicz integrability.
 
 \emph{MSC 2010:}  26A33, 47G10, 45E99, 44A99, 46E30.
 \end{footnotesize}


\section{Introduction}

\noindent Many mathematical models of physical phenomena deal with systems of Volterra integral equations with singular kernels (e.g. \cite{gv,hr,LZ}). In this paper, motivated by some nonlinear Volterra integral equations arising in Quantum Mechanics, we investigate the properties of convolution operators with kernels possibly more singular than the more known Abel ones. Namely, given a generic positive, locally integrable function $\nu$, we study the action of the operator $g\mapsto J_\nu g$ defined by
\begin{equation}
 \label{J_nu}
 (J_\nu g)(x):=\int_{0}^{x}\nu(x-s)g(s)ds,\quad x\geq0,
\end{equation}
on intervals $[0,T]$, with $T>0$ (this assumption being understood in the whole paper).

Precisely, we prove its regularizing effect in H\"older and Lebesgue spaces and its ``contractive'' effect in Sobolev spaces (where with ``contractive'' we mean that the Sobolev norm of $J_\nu g$ on $[0,T]$ can be estimated by the norm of $g$ times a constant that gets smaller as $T\to0$).

It is also worth highlighting that the assumption of local integrability of $\nu$ is the minimum requirement so that definition \eqref{J_nu} make sense in general. In fact, the aim of the paper (even though some results will require additional hypothesis) is to work with the least set of assumptions that are necessary in order to detect remarkable effects from the application of the operator $J_\nu$.

\medskip
A particular relevance in applications is acquired by the case
\begin{equation}\label{eq:i}
\nu(x)=\I(x): = \int_{0}^{\infty}\frac{x^{s-1}}{\Gamma(s)}ds
\end{equation}
(\textsc{Figure} \ref{fig:nu}), where the operator $J_\nu$ reads
\begin{equation}
 \label{eq:op_I}
 (J_\nu g)(x)=(I g)(x):=\int_{0}^{x}\I(x-s)g(s)ds.
\end{equation}
We observe that, if we denote by $\mu(x,\sigma,\alpha)$ the \emph{Volterra functions} defined by
\[
\mu (x,\sigma,\alpha):=\int_{0}^{\infty}\frac{x^{\alpha+s}s^\sigma}{\Gamma(\alpha+s+1)\Gamma(\sigma+1)}ds,
\]
then $\I(x)$ coincides with $\mu(x,0,-1)$, which is the so-called Volterra function \emph{of order} $-1$ (see \cite{E}, Section 18.3), that is discussed in Section \ref{sec-volt}.

In addition, recalling that a kernel $m\in L^{1}(0,T)$ is said a \textit{Sonine kernel} if it is a divisor of the unit with respect to the convolution operation, that is, if there exists another kernel $\ell\in L^{1}(0,T)$ such that
\[
\int_{0}^{x}\ell(x-s) m(s)ds=1,\quad\text{for a.e.}\: x\in[0,T],
\]
then, one can prove that $\I$ is a Sonine kernel, with $\ell(x)=-\gamma-\log x$, $\gamma$ representing the Euler-Mascheroni constant (see eq. \eqref{eq:Sonine}). The class of Sonine kernels is wide and there are many papers (see e.g. \cite{t} and references therein), starting with the pioneering one by Sonine (\cite{S}), where embedding theorems for integral operators with kernels displaying singularities at the origin of the type
\[
 a(x)x^{\alpha-1}\log^{m}\left(\frac{2T}{x}\right),\quad0<\alpha<1,\quad -\infty<m<\infty,
\]
are discussed. However, we stress that the results proved in the present paper are more general since they take into account also kernels that are more singular in a neighborhood of the origin, such as, indeed, the Volterra function $\I$, whose asymptotyc expansion near $0$ is given by $\frac{1}{x\log^2(1/x)}$ (see \eqref{eq:Iasympt}).

It is also worth mentioning that a first discussion on the operator $I$ is present in \cite{SKM}, whereas similar integral operators, but with more regular kernels, have been investigated more recently by \cite{cs,sc}. More in detail, in \cite{cs} it is analyzed the case of a certain class of \emph{almost decreasing} Sonine kernels in terms of \emph{weighted generalized} H\"older spaces, while in \cite{sc} an ``inverse'' operator is discussed within the framework of $L^{p}$ spaces. We also recall that in \cite{GM} some relevant features of Volterra functions are pointed out, such as asymptotic expansions and some striking relations with the Ramanujan integrals.

\medskip
The interest of the operator $I$ is mainly due to its applications in Quantum Mechanics, and precisely in the study of the Schr\"odinger equation with nonlinear point interactions in $\R^2$.

We recall briefly that a Schr\"odinger equation with a \emph{linear} point interaction with strength $\alpha\in\R$, placed at $\yv\in\R^2$, is
\[
 i \partial_t \psi(t) =  \ham \psi(t),
\]
where $\ham$ is a differential operator with domain
\[
 \begin{array}{l}
 \displaystyle \dom(\ham) = \bigg\{ \psi \in L^2(\R^2) \: :\: \forall \lambda>0,\: \psi = \phi_\lambda + q\, \mathcal{G}_{2,\lambda}(\cdot - \yv),\: \phi_\lambda \in H^2(\R^2),\:q\in\mathbb{C},\\[.4cm]
 \displaystyle \hspace{7cm} \lim_{\xv \to \yv} \phi_\lambda(\xv) = \left(\alpha+\tfrac{1}{2\pi} \log \tfrac{\sqrt{\lambda}}{2} + \tfrac{\gamma}{2\pi}\right)q\bigg\}
 \end{array}
\]
($\mathcal{G}_{2,\lambda}$ denoting the Green's function of $(-\Delta+\lambda)$ in $\R^2$) and action
\[
 (\ham + \lambda) \psi  = (- \Delta + \lambda) \phi_\lambda,\quad\forall \psi\in \dom(\ham).
\]
For a complete discussion on the solution of this equation through the theory of self-adjoint extension, we refer the reader to \cite{albe}. In addition, it is well known that, given an initial datum $\psi_0\in\dom(\ham)$, the solution of the associated Cauchy problem reads
\[
\psi(t,\xv)=(U_{0}(t)\psi_{0})(\xv)+\frac{i}{2\pi}\int_{0}^{t}U_{0}(t-s;\xv-\yv)\,q(s)ds,
\]
where $U_{0}(t)$ is the the \emph{propagator} of the free Schr\"odinger equation in $\R^2$ (with integral kernel $U_0(t; |\xv|) = e^{-\frac{|\xv|^{2}}{4 i t}}/2 i t$) and $q(t)$ (with a little abuse of notation that is usual in the literature) is a complex-valued function satisfying the so-called \emph{charge} equation
\begin{equation}
 \label{eq:lin_point}
 q(t)+\int_0^t\I(t-s)\left(4\pi\alpha-\log 4 + 2\gamma -\tfrac{i\pi}{2}\right)q(s)ds= 4\pi\int_0^t\I(t-s)(U_0(s)\psi_0)(\yv)ds.
\end{equation}
Now, a \emph{nonlinear} point interaction arises when one assumes that the strength of the interaction depends in fact on the function $q(t)$, and in particular when one sets $\alpha=\alpha_0|q(t)|^{2\sigma}$ ($\alpha_0\in\R\backslash\{0\}$, $\sigma>0$) in \eqref{eq:lin_point}, thus obtaining
\begin{equation}
 \label{eq:nonlin_point}
 q(t)+\int_0^t\I(t-s)\left(4\pi\alpha_0|q(t)|^{2\sigma}-\log 4 + 2\gamma -\tfrac{i\pi}{2}\right)q(s)ds= 4\pi\int_0^t\I(t-s)(U_0(s)\psi_0)(\yv)ds,
\end{equation}
(see \cite{CCF,CCT}). Since in the nonlinear case no theory of self-adjoint extensions is available, the relevance of the operator $I$ is clear: the well-posedness of the associated Cauchy problem is strictly related to the study of the existence and uniqueness of solutions of \eqref{eq:nonlin_point}, which strongly depends on the properties of $I$.

\begin{rem}
 Even though the application presented above concerns complex-valued functions, this papers only manages real-valued functions. However, one can check that the results of Section \ref{sob!} (which are actually required in \cite{CCF,CCT}) can be easily generalized to complex-valued functions.
\end{rem}

\medskip
Another topical example of integral kernels that are included in our general framework are the well known \emph{Abel kernels}, which correspond to the choice
\begin{equation}
 \label{eq-abel}
 \nu(x)=\frac{x^{\alpha-1}}{\Gamma(\alpha)},\quad 0<\alpha<1,
\end{equation}
in \eqref{J_nu}. These ones are very important in the theory of \emph{fractional integration} and \emph{generalized differentiation} (\cite{gv,SKM}) and, again, in Quantum Mechanics. In the study of nonlinear point interactions in $\R$ and $\R^3$, indeed, the resulting integral equations present the kernel \eqref{eq-abel}, with $\alpha=1/2$, in place of $\I$ (see \cite{ADFT,ADFT1,AT,CFNT1,CFNT}).

\medskip
Finally, we describe briefly the main results of the paper. They concern, as we told at the beginning, the properties of the operator $J_\nu$  in H\"older spaces, $L^{p}$ spaces and Sobolev spaces.

Preliminarily, since it is crucial in the following, we define the integral function of the kernel $\nu$
\begin{equation}
 \label{eq:int_funct_nu}
 N(x)=\int_0^x\nu(s)ds,\quad x>0.
\end{equation}
Since $\nu$ is always supposed positive and locally integrable, it turns out that $N$ is a positive, increasing and absolutely continuous function with $N(x)\to 0$ as $x\to0$. 

In the case of H\"older spaces, it is well known (\cite{gv}, Theorem 4.2.1 p. 70) that when the kernel is $\nu(t)=t^{\alpha-1}$, $0<\alpha<1$, the operator $J_\nu$ transforms $C^{0,\beta}$ functions into $C^{0,\alpha+\beta}$ ones, improving this way the regularity of the modulus of continuity. As a consequence of our main result of Section \ref{sectionhold} (Theorem \ref{casoholder}), we will see that more generally the improvement is at least given by the integral function of the kernel: the phenomenon that the power $\alpha-1$ gives as improvement the exponent $\alpha$ is therefore true also for any locally integrable kernel which is assumed just equivalent to a decreasing function in a neighborhood of the origin and not blowing too much (derivative bounded above, for instance) in its domain.

In the case of $L^{p}$ spaces, it is well known (\cite{gv}, Theorem 4.1.4 p. 67) that when the kernel is $\nu(t)=t^{\alpha-1}$, the operator $J_\nu$ transforms $L^p$ functions, $1<p<1/\alpha$, into $L^{p/(1-\alpha p)}$ functions. To a minor integrability of the kernel corresponds a minor gain of integrability for $J_{\nu}$, and apparently the gain disappears when the kernel is just $L^1$. As a consequence of our main result of Section \ref{sectionellepi} (Theorem \ref{casoLp}), we will show that \it any \rm kernel locally integrable (again, we assume that it is equivalent to a decreasing function in a neighborhood of the origin) gives an improvement of integrability, measured in terms of Orlicz spaces. The improvement is strictly linked to the Orlicz integrability of the kernel, hence it always exists: a classical, remarkable theorem in Orlicz spaces theory (see e.g. \cite{krasnorut}, p. 60) tells that any function $L^1$ is always in some Orlicz space strictly contained in $L^1$. Furthermore, in the case $p=\infty$, we show (Proposition \ref{pro:infinity}), under the unique assumption of local integrability, that $J_\nu$ transforms $L^\infty$ functions in continuous functions and that the $L^\infty$ norm of $J_\nu g$ on $[0,T]$ is controlled by the norm of $g$ times $N(T)$.

Finally, in the case of Sobolev spaces, it is well known (\cite{gv}, Theorem 4.2.2 p. 73) that when the kernel is $\nu(t)=t^{\alpha-1}$, the operator $J_\nu$ transforms $W^{\theta,1}$ functions, with $0<\theta<1-\alpha$, in $W^{\theta+\alpha-\varepsilon,1}$ functions. Analogous results for $W^{\theta,p}$ functions are discussed in \cite{AT,k}. In this case, the minor integrability of the kernel yields a minor gain in the Sobolev index, which disappears when the kernel is just $L^1$ (also the preservation of the index is not straightforward). As a consequence of our main result of Section \ref{sob!} (Theorem \ref{contr_lemma}), we will show that when $p=2$, provided $\theta\neq1/2$, the Sobolev index is in fact preserved and  the Sobolev norm of $J_\nu g$ is bounded, up to a multiplicative constant, by the norm of $g$ times $N(T)$. Furthermore, we will prove that (almost) the same result holds for $H^{1/2}$ functions, but just in the case $\nu=\I$ (Theorem \ref{contr_lemma_2}), and for $W^{1,1}$ functions (Theorem \ref{contr_lemma_3}).


\section{The Volterra kernel $ \I $}
\label{sec-volt}

\noindent
Since the case of a kernel equal to the Volterra function $\I$ (defined by \eqref{eq:i}, \textsc{Figure} \ref{fig:nu}) is the most relevant in the applications, it is worth stressing some basic features of $\I$. In this way one can easily see that the abstract results established in the following sections can be actually applied to this kernel.

First, we recall (see \cite{E,SKM}) that $\I$ is analytic for $t>0$ and that
\begin{equation}
\label{eq:Iasympt}
\begin{array}{ll}
 \displaystyle \I(x) = \frac{1}{x \log^2 \left(\frac{1}{x}\right)}\left[1 + \mathcal{O}(\left|\log x \right|^{-1}) \right], & \quad\text{as }x\to 0\\[1cm]
 \displaystyle \I(x) = e^{x}+\mathcal{O}(x^{-1} ),                                                                          & \quad\text{as }x\to +\infty.
\end{array}
\end{equation}
Consequently, the first expansion shows that $ \I\in L^{1}_{\textrm{loc}}(\R^+)$ and that $\I\not\in L^{p}_{\textrm{loc}}(\R^+)$, for any $p>1$.

\begin{figure}[ht]
    \centering
    \includegraphics[width=0.4\textwidth]{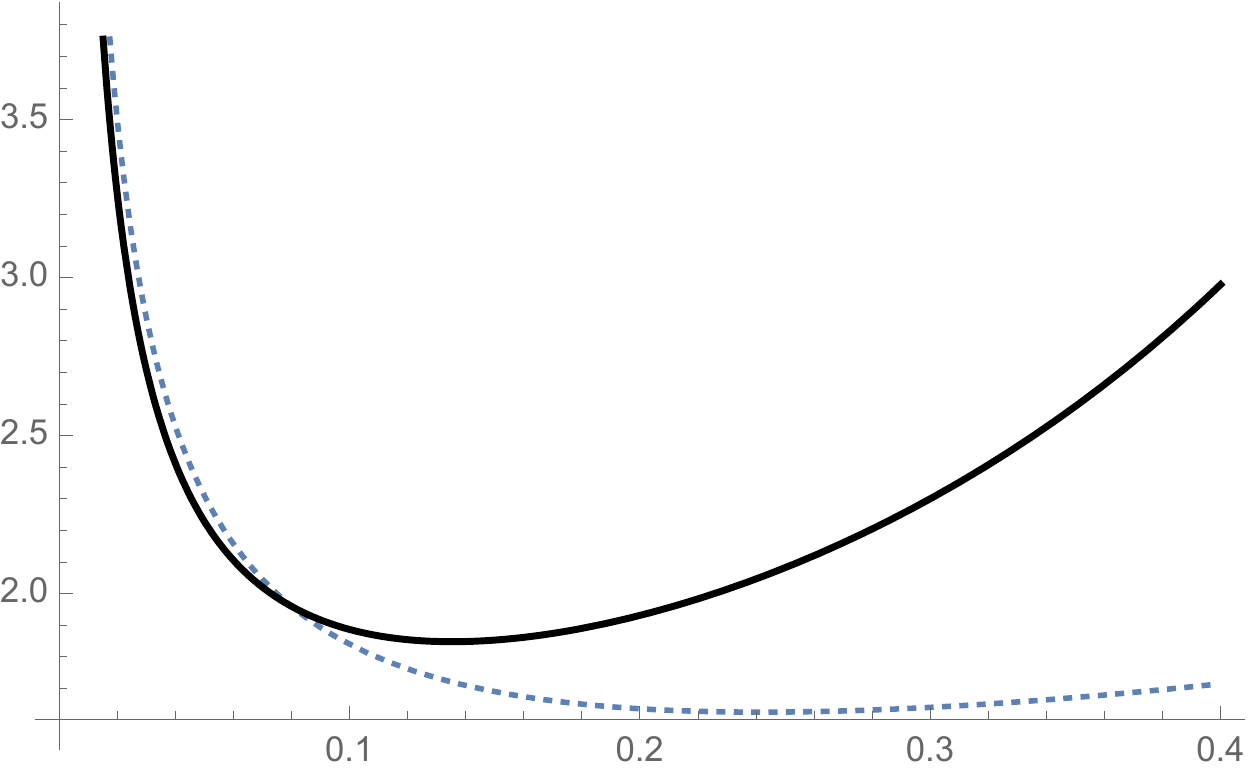}
    \caption{The plot of $\I(x)$ is in black,  the plot of the first order of the asymptotic expansion of $\I(x)$ around 0 is dotted.}
    \label{fig:nu}
\end{figure}

Also the derivative and the integral function of $\I$ will play a crucial role in the sequel. Hence, recalling (see again \cite{E,SKM})
\begin{equation}
 \label{der_volt}
 \frac{d}{dx}\mu(x,0,n)=\mu(x,0,n-1),\quad n\in\Z,\quad n\leq0,
\end{equation}
and $\I(x):=\mu(x,0,-1)$, we stress that
\begin{equation}
\label{derivat}
\frac{d}{dx}\I(x) = \mu (x,0,-2) \underset{x \to 0}{=} \frac{1}{x^2 \log^2 \left(\frac{1}{x}\right)}\left[-1 + \mathcal{O}(\left|\log x \right|^{-1}) \right]
\end{equation}
and that
\begin{equation}
\label{primit}
\frac{d}{dx}\mu(x,0,0) = \I(x).
\end{equation}
Furthermore, we can state the following lemma.

\begin{lem}
 \label{lem:conv}
 The function $\I$ is convex on $\R^+$ and  admits a positive minimum.
\end{lem}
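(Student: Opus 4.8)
The plan is to derive an integral representation of $\I$ from which both assertions follow at once. The representation in question --- available in the classical literature on Volterra functions (see \cite{E}, Section 18.3) and also obtainable by combining \eqref{primit} with the known formula for $\mu(x,0,0)$ --- is
\[
 \I(x)=e^{x}+\int_{0}^{\infty}\frac{e^{-xt}}{\pi^{2}+\log^{2}t}\,dt,\qquad x>0 .
\]
If a self-contained derivation is preferred, one can invert the Laplace transform of $\I$: from the Sonine relation $\int_{0}^{x}\ell(x-s)\I(s)\,ds=1$ with $\ell(x)=-\gamma-\log x$ (recalled in the introduction), together with the elementary transform of $\ell$, one obtains that the Laplace transform of $\I$ equals $1/\log p$; deforming the Bromwich contour onto the branch cut of $1/\log p$ along $(-\infty,0]$, the simple pole at $p=1$ produces the term $e^{x}$ and the two sides of the cut produce the displayed (positive) integral, the small circle around $0$ and the large arcs being negligible since $1/\log p\to0$ both as $p\to0$ and as $|p|\to\infty$. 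In any case the integral converges for each $x>0$ --- its integrand is $\mathcal{O}((\log t)^{-2})$ as $t\to0^{+}$ and $\mathcal{O}(e^{-xt})$ as $t\to+\infty$ --- and blows up as $x\to0^{+}$, in agreement with \eqref{eq:Iasympt}.

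Granting this representation, convexity is immediate. On any compact subinterval $[a,b]\subset(0,\infty)$ the functions $t\mapsto t^{k}e^{-xt}/(\pi^{2}+\log^{2}t)$, $k=0,1,2$, are bounded uniformly in $x\in[a,b]$ by $t^{k}e^{-at}/(\pi^{2}+\log^{2}t)\in L^{1}(0,\infty)$, so one may differentiate twice under the integral sign and get
\[
 \I''(x)=e^{x}+\int_{0}^{\infty}\frac{t^{2}e^{-xt}}{\pi^{2}+\log^{2}t}\,dt>0,\qquad x>0 .
\]
Equivalently, $\I$ is the sum of the convex function $x\mapsto e^{x}$ and a completely monotone --- hence convex --- function. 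Thus $\I$ is (strictly) convex on $\R^{+}$.

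Finally, $\I$ is continuous and strictly positive on $(0,\infty)$ (positivity being clear already from \eqref{eq:i}, whose integrand is positive for every $s>0$), and by \eqref{eq:Iasympt} one has $\I(x)\to+\infty$ both as $x\to0^{+}$ and as $x\to+\infty$. Hence $\I$ attains its infimum over $(0,\infty)$ at some point $x_{0}\in(0,\infty)$; by the convexity just proved, $x_{0}$ is the unique zero of $\I'$ (which exists because, by \eqref{derivat}, $\I'(x)\to-\infty$ as $x\to0^{+}$, whereas $\I'(x)\sim e^{x}>0$ as $x\to+\infty$). Since $\I>0$, the minimum value $\I(x_{0})$ is positive, and the lemma follows. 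The only real obstacle is establishing the integral representation; once that is in hand (by citation or by the Laplace-inversion argument sketched above), everything else is routine.
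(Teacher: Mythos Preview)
Your proof is correct and follows essentially the same route as the paper: both arguments rest on the Ramanujan-type decomposition $\mu(x,0,0)=e^{x}-R(x)$ with $R$ completely monotone, the paper differentiating this three times to get $\I''\ge 0$, while you differentiate the derived formula $\I(x)=e^{x}-R'(x)$ twice---the same computation one level down. The existence of a positive minimum is handled identically (continuity, positivity, coercivity from \eqref{eq:Iasympt}).
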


\begin{proof}
 The second part is immediate since $\I$ is continuous, positive and coercive by \eqref{eq:Iasympt}. On the other hand, in order to prove the second part, it is sufficient to show $\frac{d^2}{dx^2}\I(x)\geq0$; namely, by \eqref{der_volt}, that $\frac{d^3}{dx^3}\mu(x,0,0)\geq0$. Now, following \cite{GM} (eq. (3.1)) and \cite{h}, we find that
 \[
  \mu(x,0,0)=e^x-R(x), \quad \mbox{where} \quad R(x):=\int_0^\infty\frac{e^{-s x}}{s(\log^2s+\pi^2)}ds
 \]
 denotes the \emph{Ramanujan} function (\textsc{Figure} \ref{fig:erre}). Hence, $\frac{d^3}{dx^3}\mu(x,0,0)=e^x-\frac{d^3}{dx^3}R(x)$ and, since $R$ is \emph{completely monotonic} (i.e., for every $k>0$, $R^{(k)}$ does exist and $(-1)^k R^{(k)}\geq0$), this entails that $\frac{d^3}{dx^3}\mu(x,0,0)\geq0$ and thus that $\I$ is convex.
\end{proof}

\begin{figure}[ht]
    \centering
    \includegraphics[width=0.4\textwidth]{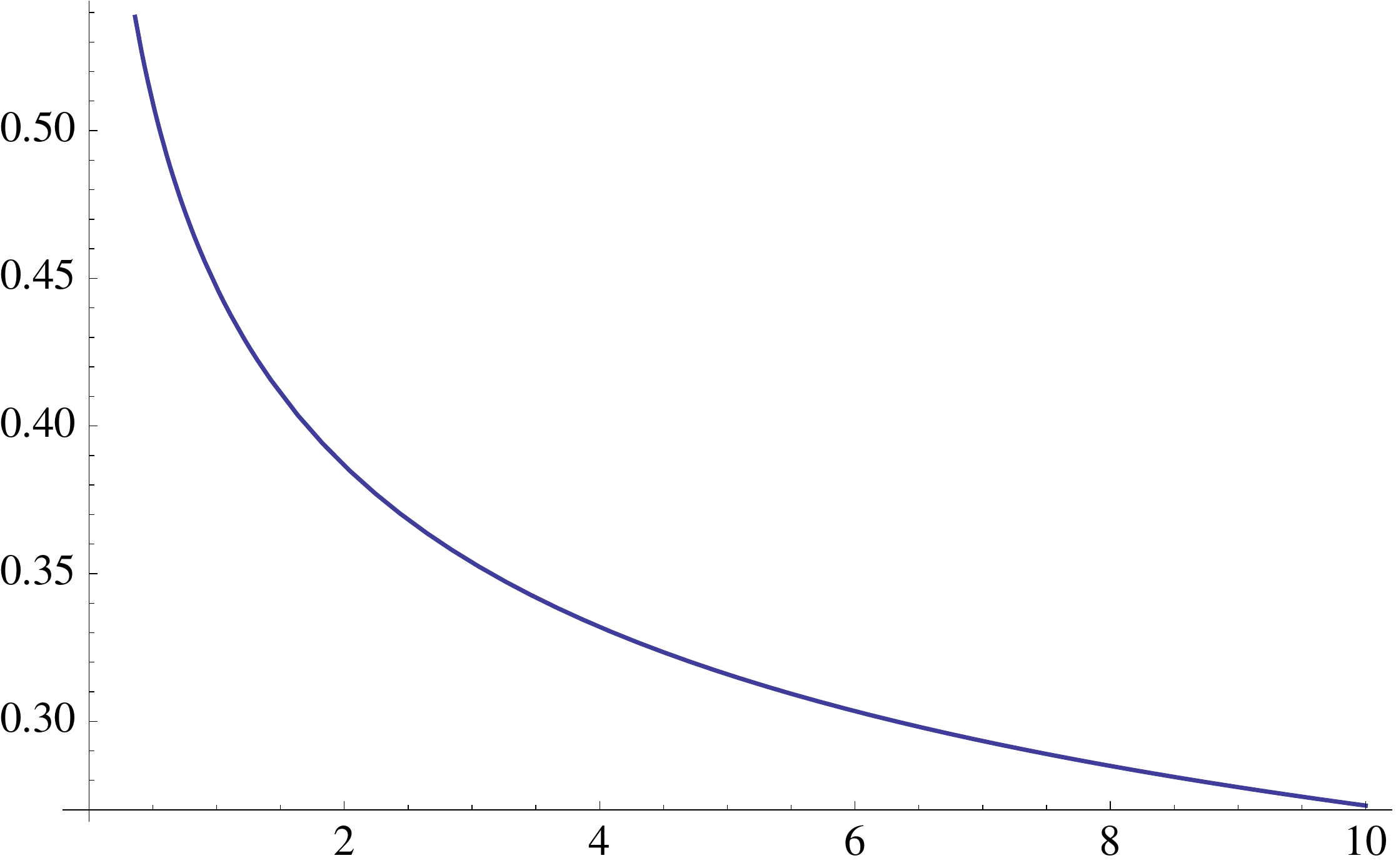}
    \caption{The plot of $R(x)$ .}
    \label{fig:erre}
\end{figure}

It is also convenient to introduce the function 
\begin{equation}\label{enne}
\mathcal{N}(x)=\int_{0}^{x}\I(s)ds,\quad x\geq0 ,
\end{equation}
(\textsc{Figure} \ref{fig:enne}). By the properties of $ \I$, we see that $\NN$ is positive, increasing and absolutely continuous on bounded intervals intervals. Moreover, $\NN(x)\to0$ as $x\to0$, and precisely
\begin{equation}
 \label{eq:Nasympt}
 \NN(x) = \frac{1}{\log\left(\tfrac{1}{x}\right)}+\mathcal{O}(|\log x|^{-2}),\quad \mbox{as}\: x\to0.
\end{equation}

\begin{figure}[ht]
    \centering
    \includegraphics[width=0.5\textwidth]{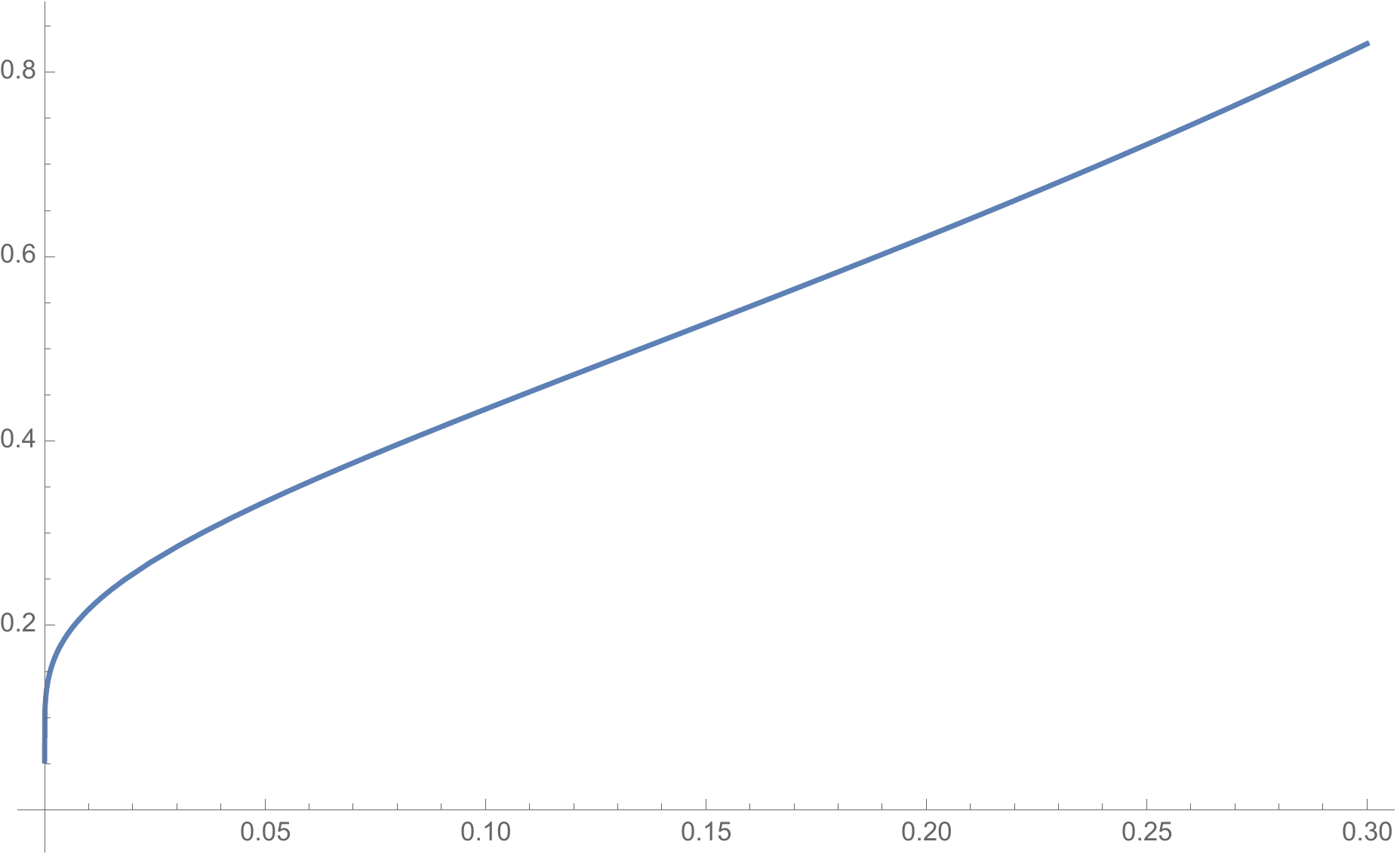}
    \caption{Plot of $\mathcal{N}(x)$ around zero}
    \label{fig:enne}
\end{figure}

\begin{rem}
 One easily sees that, as one sets $\nu=\I$ in \eqref{eq:int_funct_nu}, $N$ is equal to $\NN$. On the other hand, from \eqref{der_volt} one also notes that $\NN$ coincides, up to an additive constant, with $\mu(\cdot,0,0)$.
\end{rem}

Finally, we point out a relevant property of the operator $I$ defined by \eqref{eq:op_I}, which is strictly connected to the fact that $\I$ is a Sonine kernel. First, define the integral operator
\[
 (\Phi g)(x) : = \int_0^x \phi(x - s) g(s)ds,\quad \text{where}\quad \phi(x) = - \gamma - \log x.
\]
Then, one notes that, as $\phi\in L^1(0,T)$, $\Phi$ is well defined for each function $g\in L^1(0,T)$. In addition, one can prove the following result.

\begin{pro}
 \label{pro:i_identity}
 If $ g \in L^1(0,T) $, then
 \[
  \big( \Phi (I g) \big)(x) = \big( I (\Phi g) \big)(x) = \int_0^x g(s)ds,\quad \forall x\in[0,T].
 \]
\end{pro}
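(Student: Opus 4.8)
The plan is to recognise both $I$ and $\Phi$ as one--sided (Laplace) convolution operators and to reduce the statement to the associativity and commutativity of such convolutions, combined with the Sonine identity for $\I$ and its conjugate kernel $\phi(x)=-\gamma-\log x$, i.e.\ $\int_0^x\phi(x-s)\I(s)\,ds=1$ for a.e.\ $x\in[0,T]$, recalled in the Introduction (cf.\ \eqref{eq:Sonine}). Writing $f\ast h$ for the function $x\mapsto\int_0^x f(x-s)h(s)\,ds$, one has $Ig=\I\ast g$ and $\Phi g=\phi\ast g$, so the claim reads $\phi\ast(\I\ast g)=\I\ast(\phi\ast g)=(\phi\ast\I)\ast g$; since $\phi\ast\I\equiv1$ and the convolution of the constant function $1$ with $g$ is exactly $x\mapsto\int_0^x g(s)\,ds$, it remains only to justify these manipulations.

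Concretely, I would first write $(\Phi(Ig))(x)=\int_0^x\phi(x-s)\big(\int_0^s\I(s-r)g(r)\,dr\big)\,ds$ and interchange the order of integration over the triangle $\{0\le r\le s\le x\}$. This is legitimate by Fubini--Tonelli: $\I\in L^1_{\textrm{loc}}(\R^+)$ by \eqref{eq:Iasympt}, $\phi\in L^1(0,T)$ and $g\in L^1(0,T)$, and applying Tonelli to the nonnegative integrand $|\phi(x-s)|\,|\I(s-r)|\,|g(r)|$, together with the bound $\||\phi|\ast|\I|\|_{L^1(0,T)}\le\|\phi\|_{L^1(0,T)}\|\I\|_{L^1(0,T)}$ (Young's inequality for the half--line convolution), shows the integrand is summable on the triangle. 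After the interchange and the substitution $u=s-r$ in the inner integral one obtains $(\Phi(Ig))(x)=\int_0^x g(r)\big(\int_0^{x-r}\phi((x-r)-u)\I(u)\,du\big)\,dr$.

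Next I would invoke the Sonine identity: the inner integral equals $1$ for a.e.\ value of its upper endpoint, hence for a.e.\ $r\in(0,x)$, so the outer integral collapses to $\int_0^x g(r)\,dr$, which is the first identity. For $(I(\Phi g))(x)=\int_0^x g(s)\,ds$ I would either repeat the same argument with the roles of $\phi$ and $\I$ exchanged, using the commutativity $\I\ast\phi=\phi\ast\I$ of the half--line convolution (again a consequence of Fubini, both kernels being locally integrable), or simply chain $I(\Phi g)=\I\ast(\phi\ast g)=(\I\ast\phi)\ast g=(\phi\ast\I)\ast g=\Phi(Ig)$ and reuse the previous computation.

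I do not expect a genuine obstacle here. The only two points requiring some care are the justification of the Fubini interchange in the triple integral --- handled via the local integrability of $\I$ and $\phi$ and Young's inequality as above --- and the bookkeeping of the ``almost everywhere'' in the Sonine identity, which is harmless since that identity is integrated against $g\in L^1$ over a one--dimensional set, so the exceptional set of upper endpoints contributes nothing.
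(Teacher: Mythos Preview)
Your proposal is correct and follows essentially the same route as the paper: both arguments rest on the Sonine identity $\phi\ast\I\equiv 1$ (which the paper recalls from \cite{SKM}, Lemma~32.1) together with a Fubini interchange in the triple integral. Your packaging in terms of associativity and commutativity of the half--line convolution is slightly cleaner than the paper's explicit manipulation of iterated integrals, and your justification of the Fubini step via Tonelli and Young's inequality is more transparent than the paper's symmetry argument involving $\phi(|x-\sigma-s|)$; conversely, the paper makes the derivation of \eqref{eq:i_identity} from \eqref{eq:Sonine} and \eqref{primit} explicit rather than taking it as given.
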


\begin{proof}
 We first observe that one has
 \begin{equation}
  \label{eq:i_identity}
  \int_{0}^{x} \I(x - s) \phi(s)ds = 1.
 \end{equation}
 In \cite{SKM}, Lemma 32.1, it is indeed claimed that (setting $ \alpha = 1, h = 0 $ therein)
 \begin{equation}
  \label{eq:Sonine}
  \int_{0}^{x} \left(\log s - \psi(1) \right) \frac{d}{dx}\mu(x - s,0,0) ds= -1,
 \end{equation}
 with $\psi(1)=-\gamma$, and since from \eqref{primit} $ \frac{d}{dx}\mu(x,0,0) = \I(x) $, \eqref{eq:i_identity} is proved.

 Now, in the expression
 \[
  \big( I (\Phi g) \big)(x) = \int_0^{x} \int_0^{x-s} \I(s) \phi(x-s-\sigma) g(\sigma)d\sigma\,ds,
 \]
 we exchange the order of the integration, since
 \[
  \begin{array}{l}
   \displaystyle \int_0^{x} \int_0^{x-s} \I(s) \phi(x-\sigma-s) g(\sigma) d\sigma\,ds  + \int_0^{x} \int_0^{x-\sigma} \I(s) \phi(x-\sigma-s) g(\sigma)ds\,d\sigma= \\[.7cm]
   \displaystyle = \int_0^{x} \int_0^{x} \I(s) \phi(|x-\sigma-s|) g(\sigma)ds\,d\sigma = 2  \int_0^{x} \int_0^{x-s} \I(s) \phi(|x-\sigma-s|) g(\sigma)d\sigma\,ds,
  \end{array}
 \]
and using \eqref{eq:i_identity}, we conclude that
 \[
  \big( I (\Phi g) \big)(x) = \int_0^{x} \int_0^{x-\sigma} \I(s) \phi(x-\sigma-s) g(\sigma)ds\,d\sigma = \int_0^{x} g(\sigma)d\sigma.
 \]
 Finally, given that an easy change of variable shows $\big( \Phi (I g) \big)(x) = \big( I (\Phi g) \big)(x)$, the proof is complete.
\end{proof}

\begin{rem}
 We note that, in view of Theorem \ref{contr_lemma_3}, it is
 \[
  \frac{d}{dx}\big( \Phi (I g) \big)(x) = \frac{d}{dx}\big( I (\Phi g) \big)(x) = g(x).
 \]

\end{rem}


\section{Regularization in H\"older spaces}\label{sectionhold}

\noindent
Let $0<\tau_0<\infty$ and let $\nu$, $\tilde\nu$ be absolutely continuous and positive in $]0,\tau_0]$. We say 
that $\nu$, $\tilde\nu$ are $equivalent$ if there exist two positive constants $c_1,c_2$ such that
$$ c_1\nu(x)\le \tilde\nu(x) \le c_2\nu(x)\quad \forall x\in ]0,\tau_0].$$
Of course any function equivalent to $\nu$ in $]0,\tau_0]$ is of type $b\nu$, where $b=b(\cdot)$ is absolutely continuous in $]0,\tau_0]$ and such that
\begin{equation}\label{bi} 
0<b^-\le b(x)\le b^+<\infty \quad 
\forall x\in ]0,\tau_0].
\end{equation}

The statements of this section hold for certain functions $\nu$ which are decreasing in intervals of the type $(0,\tau_{0})$ and, more generally, they hold for functions equivalent to decreasing functions in intervals of the type $(0,\tau_{0})$. For the sake of simplicity, the functions $\nu$ in the statements will be always assumed equivalent to decreasing functions in intervals of the type $(0,\tau_{0})$, and the corresponding decreasing functions will be written as products $b\nu$, where $b=b(\cdot)$ is an absolutely continuous function in $]0,\tau_0]$ satisfying \eqref{bi}.

\begin{lem}\label{primolemma}
Let $0<\beta<1$, and let $\nu\in AC(]0,T])\cap L^1(0,T)$ be positive and equivalent to a decreasing function in $(0,\tau_{0})$ for some $0<\tau_{0}\leq T$. If
\begin{equation}\label{decresenzaderiv}
x\to\frac{1}{x^{\beta}}\int_0^x b(s)\nu(s)ds
\,\,\searrow\,\, \textrm{in}\,\, (0, \tau_{0}),
\end{equation}
\noindent 
then
\begin{equation}\label{tesilemma}
\frac{\nu(\varepsilon\tau)\varepsilon}{N(\varepsilon)}\leq 
\textit{c}\left(b(\cdot),\nu(\cdot),T\right)\tau^{\beta-1}\quad\forall\,\, 1<\tau<\frac{T}{\varepsilon},\,\forall\,\, 0<\varepsilon<T,
\end{equation}
where $N$ is defined by \eqref{eq:int_funct_nu}.
\end{lem}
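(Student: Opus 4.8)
The plan is to control the quantity $\nu(\varepsilon\tau)\varepsilon/N(\varepsilon)$ by first replacing $\nu$ with the genuinely decreasing function $b\nu$ (which costs only the constants $b^-,b^+$ from \eqref{bi}), then using the monotonicity hypothesis \eqref{decresenzaderiv} to compare the value of $b\nu$ at the point $\varepsilon\tau$ with its average over $(0,\varepsilon\tau)$, and finally comparing that average with $N(\varepsilon)$. Since $\tau>1$, the point $\varepsilon\tau$ lies to the right of $\varepsilon$; the decreasing function $x\mapsto x^{-\beta}\int_0^x b(s)\nu(s)\,ds$ evaluated at $\varepsilon\tau$ is therefore bounded above by its value at $\varepsilon$, which yields
\[
\frac{1}{(\varepsilon\tau)^\beta}\int_0^{\varepsilon\tau} b(s)\nu(s)\,ds \;\le\; \frac{1}{\varepsilon^\beta}\int_0^{\varepsilon} b(s)\nu(s)\,ds \;\le\; \frac{b^+}{\varepsilon^\beta}\,N(\varepsilon),
\]
so that $\int_0^{\varepsilon\tau} b(s)\nu(s)\,ds \le b^+\tau^\beta\, N(\varepsilon)$. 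This is the step that converts the monotonicity assumption into a usable inequality, and it is really the heart of the argument.

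Next I would bound $\nu(\varepsilon\tau)\varepsilon$ from above by (a constant times) $\int_0^{\varepsilon\tau} b(s)\nu(s)\,ds$ divided by something of order $\tau$. Here is where one must be a little careful, since a pointwise value of a decreasing function is not immediately controlled by its integral over an interval containing the point as its right endpoint. The clean way is to integrate over $(\varepsilon\tau/2,\varepsilon\tau)$: monotonicity of $b\nu$ on this subinterval (which lies in $(0,\tau_0)$ provided $\varepsilon\tau<\tau_0$, and otherwise one handles the bounded range $[\tau_0,T]$ separately using that $\nu$ has a derivative bounded above, or simply that $\nu$ is continuous hence bounded on $[\tau_0,T]$, making the claimed estimate trivial there) gives
\[
b^-\,\nu(\varepsilon\tau)\cdot\frac{\varepsilon\tau}{2} \;\le\; (b\nu)(\varepsilon\tau)\cdot\frac{\varepsilon\tau}{2} \;\le\; \int_{\varepsilon\tau/2}^{\varepsilon\tau} b(s)\nu(s)\,ds \;\le\; \int_0^{\varepsilon\tau} b(s)\nu(s)\,ds.
\]
Combining the two displays yields $b^-\,\nu(\varepsilon\tau)\,\varepsilon\tau/2 \le b^+\tau^\beta\,N(\varepsilon)$, hence $\nu(\varepsilon\tau)\,\varepsilon/N(\varepsilon) \le (2b^+/b^-)\,\tau^{\beta-1}$, which is exactly \eqref{tesilemma} with $c(b(\cdot),\nu(\cdot),T)=2b^+/b^-$.

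The main obstacle, as flagged above, is the range where $\varepsilon\tau$ escapes the interval $(0,\tau_0)$ on which we have monotonicity: there the hypothesis \eqref{decresenzaderiv} says nothing, and one must instead invoke the standing assumption that $\nu$ does not blow up on $[\tau_0,T]$ (it is absolutely continuous on $]0,T]$ and, as discussed before the lemma, has derivative bounded above, so it is bounded on $[\tau_0,T]$ by some $M=M(\nu,T)$) while $N(\varepsilon)$ is bounded below — but this last point is itself delicate, since $N(\varepsilon)\to 0$. The resolution is that when $\varepsilon\tau\ge\tau_0$ we have $\varepsilon\ge\tau_0/\tau$ and also $\tau\le T/\varepsilon$, so $\varepsilon$ cannot be too small relative to $\tau$; more precisely $\tau^{1-\beta}\le (T/\varepsilon)^{1-\beta}$ and $N(\varepsilon)\ge N(\tau_0/\tau)\ge$ a suitable positive quantity, and one checks that $\nu(\varepsilon\tau)\varepsilon \le M\varepsilon \le MT/\tau$, so $\nu(\varepsilon\tau)\varepsilon/N(\varepsilon)\le MT/(\tau N(\varepsilon))$, and bounding $N(\varepsilon)$ below on the relevant range of $\varepsilon$ (using that $N$ is increasing and $\varepsilon$ is bounded below there) absorbs everything into the constant, at the cost of possibly enlarging $c(b(\cdot),\nu(\cdot),T)$ and using $\tau^{\beta-1}\ge (T/\varepsilon)^{\beta-1}$ appropriately. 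Assembling the two regimes gives the stated inequality uniformly in $1<\tau<T/\varepsilon$ and $0<\varepsilon<T$.
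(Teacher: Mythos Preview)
Your argument in the principal regime $\varepsilon\tau\le\tau_0$ is correct and is essentially the paper's: the paper also bounds $\nu(\varepsilon\tau)$ by the average $\frac{1}{\varepsilon\tau}\int_0^{\varepsilon\tau}b\nu$ (your half-interval trick is an equivalent variant) and then applies \eqref{decresenzaderiv} at the pair $\varepsilon<\varepsilon\tau$ to produce the factor $\tau^{\beta-1}$.

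The gap is in your treatment of the regime $\varepsilon\tau>\tau_0$. Your claim that ``$\varepsilon$ is bounded below there'' is false: take $\varepsilon=\tau_0/n$, $\tau=n$, so $\varepsilon\tau=\tau_0$ while $\varepsilon\to0$. Likewise, $N(\tau_0/\tau)$ is not ``a suitable positive quantity'' uniformly, since $N(\tau_0/\tau)\to0$ as $\tau\to\infty$. After your (correct) reduction $\nu(\varepsilon\tau)\varepsilon/N(\varepsilon)\le MT/(\tau N(\varepsilon))$, what you actually need is $\tau^\beta N(\varepsilon)\ge\text{const}$, and for small $\varepsilon$ no uniform lower bound on $N(\varepsilon)$ alone can give this.

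The missing idea is to invoke the monotonicity hypothesis \eqref{decresenzaderiv} a \emph{second} time, now as a \emph{lower} bound on $N$: for $\varepsilon\le\tau_0$ it yields
\[
\frac{N(\varepsilon)}{\varepsilon^\beta}\;\ge\;\frac{b^-}{b^+}\,\frac{N(\tau_0)}{\tau_0^\beta},
\]
and since $\varepsilon\tau\ge\tau_0$ one gets $\tau^\beta N(\varepsilon)\ge (b^-/b^+)\,(\varepsilon\tau)^\beta N(\tau_0)/\tau_0^\beta\ge (b^-/b^+)N(\tau_0)$, which closes the estimate. (When $\varepsilon>\tau_0$ one simply uses $N(\varepsilon)\ge N(\tau_0)$ and $\tau^\beta>1$.) This is exactly how the paper handles the two subcases $\varepsilon\le\tau_0$ and $\varepsilon>\tau_0$ within the range $\tau_0<\varepsilon\tau<T$.
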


\begin{rem}
Inequalities coming from assumptions of monotonicity of ratios between functions and powers are very well known among researchers working in Orlicz spaces. Some proofs of such inequalities work also without the assumption of convexity (the reader may compare this lemma e.g. with Theorem 3 in \cite{raoren} or with the results in Section 3 of \cite{Maligr}), however, the main feature of \eqref{tesilemma} is that it has been obtained from assumptions of monotonicity which hold only in a neighborhood of the origin and not in the whole domain of the functions involved (where, however, at least a boundedness is required; this assumption appears implicitly in the hypothesis of continuity which holds until the endpoint $T$). 
\end{rem}

\begin{proof}
We preliminarly note that, since $b\nu$ is decreasing in $(0,\tau_{0})$, 
\begin{equation}\label{decreconderiv}
\displaystyle \nu(x)\le \frac{1}{b^-}b(x)\nu(x)\leqslant
\frac{1}{b^-}\cdot \frac{1}{x}\int_0^x b(s)\nu(s)ds
\le
\frac{b^+}{b^-}\frac{N(x)}{x} \,\,\,\, \forall x\in (0, \tau_{0}]\, .
\end{equation}

\noindent Let $0<\varepsilon<T$, $\displaystyle 1<\tau<\frac{T}{\varepsilon}$. If $\varepsilon\tau\leqslant\tau_{0}$, by \eqref{decreconderiv} and \eqref{decresenzaderiv} respectively,
\begin{equation}\label{prima}
\frac{\nu(\varepsilon\tau)\,\varepsilon}{N(\varepsilon)}\leqslant
\frac{b^+}{b^-}\frac{N(\varepsilon\tau)}{\varepsilon\,\tau}\frac{\varepsilon}{N(\varepsilon)}
=\frac{b^+}{b^-}\frac{N(\varepsilon\tau)}{(\varepsilon\,\tau)^{\beta}}\frac{(\varepsilon\,\tau)^{\beta}}{\tau\,N(\varepsilon)}
\leqslant\left(\frac{b^+}{b^-}\right)^2
\frac{N(\varepsilon)}{\varepsilon^{\beta}}\frac{(\varepsilon\tau)^{\beta}}{\tau\,N(\varepsilon)}=\left(\frac{b^+}{b^-}\right)^2\tau^{\beta-1}
\end{equation}

\noindent and, if $\tau_{0}<\varepsilon\tau<T$, $\displaystyle 0<\varepsilon\leqslant\tau_{0}$ then again by \eqref{decresenzaderiv}, $\displaystyle \frac{N(\varepsilon)}{\varepsilon^{\beta}}\geqslant \frac{b^-}{b^+} \frac{N(\tau_{0})}{\tau_{0}^{\beta}}$ and therefore, setting
$
\nu^+=\displaystyle\max_{[\tau_{0},T]}\nu\, ,
$
\begin{equation}\label{seconda}
\frac{\nu(\varepsilon\tau)\,\varepsilon}{N(\varepsilon)}\leqslant
\nu^+\varepsilon^{1-\beta}\frac{\varepsilon^{\beta}}{N(\varepsilon)}\leqslant\frac{b^+}{b^-}
\nu^+\left(\frac{T}{\tau}\right)^{1-\beta}\frac{\tau_{0}^{\beta}}{N(\tau_{0})}=\frac{b^+}{b^-}\frac{\nu^+T^{1-\beta}\tau_{0}^{\beta}}{N(\tau_{0})}\tau^{\beta-1}
\le\frac{b^+}{b^-} \frac{\nu^+T}{N(\tau_{0})}\tau^{\beta-1}
\end{equation}

\noindent Finally, if $\tau_{0}<\varepsilon\tau<T$, $\displaystyle \varepsilon>\tau_{0}$ then, since $\nu>0$ implies that $N$ is increasing, 
\begin{equation}\label{terza}
\frac{\nu(\varepsilon\tau)\,\varepsilon}{N(\varepsilon)}\leqslant
\nu^+\varepsilon^{1-\beta}\frac{\varepsilon^{\beta}}{N(\varepsilon)}\leqslant
\nu^+\left(\frac{T}{\tau}\right)^{1-\beta}\frac{(\varepsilon\,\tau)^{\beta}}{N(\tau_{0})}<\frac{\nu^+T^{1-\beta}T^{\beta}}{\tau^{1-\beta}\,N(\tau_{0})}=\frac{\nu^+\,T}{N(\tau_{0})}\tau^{\beta-1}.
\end{equation}
From \eqref{prima}, \eqref{seconda}, \eqref{terza}, we get \eqref{tesilemma}.
\end{proof}

The following statement is an immediate consequence of Lemma \ref{primolemma}.

\begin{cor}\label{usatodavvero}
In the same assumptions of Lemma \ref{primolemma}, for any $\alpha<1-\beta$ it is
\[
\int_{1}^{\frac{T}{\varepsilon}}\,(\tau+1)^{\alpha-1}\frac{\nu(\varepsilon\tau)\varepsilon}{N(\varepsilon)}d\tau\le\textit{c}\left(b(\cdot),\nu(\cdot),T\right)\int_{1}^{\frac{T}{\varepsilon}}\,(\tau+1)^{\alpha-1}\tau^{\beta-1}d\tau\le 
\textit{c}\left(b(\cdot),\nu(\cdot),T,\alpha+\beta\right)<
\infty
\]
uniformly in $\varepsilon$, $0<\varepsilon<T$.
\end{cor}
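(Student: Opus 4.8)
The plan is to feed the pointwise bound of Lemma~\ref{primolemma} into the integral and then reduce the whole matter to an elementary one-dimensional convergence estimate. For the first inequality: since $0<\varepsilon<T$ the range of integration $\tau\in(1,T/\varepsilon)$ is nonempty and every $\tau$ in it satisfies $1<\tau<T/\varepsilon$, so estimate \eqref{tesilemma} holds at (a.e.) such $\tau$ with a constant $c(b(\cdot),\nu(\cdot),T)$ independent of $\varepsilon$. Multiplying \eqref{tesilemma} by the nonnegative weight $(\tau+1)^{\alpha-1}$ and integrating over $(1,T/\varepsilon)$ gives the first inequality at once, with exactly the constant furnished by Lemma~\ref{primolemma}.

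For the second inequality I would estimate $\int_{1}^{T/\varepsilon}(\tau+1)^{\alpha-1}\tau^{\beta-1}\,d\tau$ uniformly in $\varepsilon$. Since $\alpha<1-\beta\le 1$ (recall $\beta>0$), the exponent $\alpha-1$ is negative; as $\tau\ge 1$ we have $\tau+1>\tau>0$, hence $(\tau+1)^{\alpha-1}\le\tau^{\alpha-1}$, and therefore the integrand is dominated by $\tau^{\alpha+\beta-2}$. Consequently
\[
\int_{1}^{T/\varepsilon}(\tau+1)^{\alpha-1}\tau^{\beta-1}\,d\tau\ \le\ \int_{1}^{+\infty}\tau^{\alpha+\beta-2}\,d\tau\ =\ \frac{1}{1-\alpha-\beta},
\]
the improper integral being convergent precisely because $\alpha+\beta-2<-1$, i.e.\ because $\alpha+\beta<1$, which is the standing hypothesis on $\alpha$. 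Its value depends on $\alpha+\beta$ but not on $\varepsilon$, so combining the two steps yields the statement with $c(b(\cdot),\nu(\cdot),T,\alpha+\beta)=c(b(\cdot),\nu(\cdot),T)/(1-\alpha-\beta)$.

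There is essentially no obstacle in this argument: the only points requiring a moment's care are the direction of the inequality $(\tau+1)^{\alpha-1}\le\tau^{\alpha-1}$, which rests on the sign of $\alpha-1$, and the recognition that the assumption $\alpha<1-\beta$ is exactly what makes $\int_{1}^{\infty}\tau^{\alpha+\beta-2}\,d\tau$ finite --- this is the structural reason the corollary is phrased with that restriction on $\alpha$. Everything else is a direct substitution into Lemma~\ref{primolemma} together with a standard convergence criterion for power integrals at infinity.
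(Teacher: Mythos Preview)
Your proposal is correct and matches the paper's approach: the paper gives no detailed proof, simply declaring the corollary an immediate consequence of Lemma~\ref{primolemma}, and what you have written is precisely the natural way to make that immediacy explicit. Your identification of the two key points---the sign of $\alpha-1$ for the comparison $(\tau+1)^{\alpha-1}\le\tau^{\alpha-1}$, and the condition $\alpha+\beta<1$ for convergence of the tail integral---is exactly right.
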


The next lemma is trivially true for decreasing functions $\nu$ (see the CASE (i) of the proof), and it provides a version of the inequality in case of functions which are decreasing only in a neighborhood of the origin (however, as in the remark above, one can see that, again, an assumption of boundedness has been made implicitly).

\begin{lem}\label{secondolemma}
If $\nu\in AC(]0,T])\cap L^1(0,T)$ is positive and equivalent to a decreasing function in $(0,\tau_{0})$ for some $0<\tau_{0}\leq T$,
then
\begin{equation}\label{thesissecondolemma}
\int_x^y\nu (s)ds\le \textit{c}\left(b(\cdot),\nu(\cdot),T\right)\int_0^{y-x}\nu(s)ds
\quad \forall x,y\in (0,T), \frac y2\le x<y.
\end{equation}
\end{lem}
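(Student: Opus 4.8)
\textbf{Proof plan for Lemma \ref{secondolemma}.}

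The plan is to split into two cases according to whether the interval $[x,y]$ fits inside the region $(0,\tau_0)$ where (an equivalent of) $\nu$ is decreasing, or whether it protrudes into the ``tail'' region $[\tau_0,T]$. In the first case, when $0<x<y\le\tau_0$, I would exploit monotonicity of $b\nu$ together with the condition $y/2\le x<y$ (so that $y-x\le y/2\le x$, i.e. the interval $[x,y]$ has length at most its left endpoint). Writing $\nu\le \frac{1}{b^-}b\nu$ and using that $b\nu$ is decreasing, one bounds $\int_x^y\nu(s)\,ds\le \frac{1}{b^-}(y-x)\,b(x)\nu(x)$. On the other hand, since $b\nu$ decreases on $(0,\tau_0)$ and $y-x\le x$, the interval $[0,y-x]$ lies to the left of (or contains points only smaller than) $x$, so $\int_0^{y-x}\nu(s)\,ds\ge \frac{1}{b^+}\int_0^{y-x}b(s)\nu(s)\,ds\ge \frac{1}{b^+}(y-x)\,b(y-x)\nu(y-x)\ge \frac{1}{b^+}(y-x)\,b(x)\nu(x)$, because $b\nu$ is decreasing and $y-x\le x<\tau_0$. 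Comparing the two displays yields the claim with constant $b^+/b^-$ in this case.

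The second, genuinely delicate case is when $y>\tau_0$, so $[x,y]$ meets the tail. Here $N$ is bounded below on $[\tau_0,T]$ by the fixed positive number $N(\tau_0)$, so $\int_0^{y-x}\nu(s)\,ds=N(y-x)$ needs a lower bound that does not degenerate; the key point is that the constraint $y/2\le x$ forces $y-x\le y/2$, hence also, since $y\le T$, one can still control things, but the worry is when $y-x$ is very small while $x,y$ are both near $\tau_0$ from above --- then the right-hand side $N(y-x)$ is small and we must show the left-hand side $\int_x^y\nu$ is correspondingly small. The resolution is that $\nu$ is bounded on $[\tau_0/2,T]$ (by continuity up to $T$; call the bound $\nu^+$ as in Lemma \ref{primolemma}), so $\int_x^y\nu(s)\,ds\le \nu^+(y-x)$, while $N(y-x)\ge$ a linear-in-$(y-x)$ quantity only for small arguments. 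More carefully: if $y-x\le \tau_0$ then $N(y-x)\ge \frac{1}{b^+}\int_0^{y-x}b(s)\nu(s)\,ds\ge \frac{b(y-x)\nu(y-x)}{b^+}(y-x)$, and since $b\nu$ is decreasing on $(0,\tau_0)$ this is $\ge \frac{b^-\nu(\tau_0^-)}{b^+}(y-x)$ where $\nu(\tau_0^-):=\lim_{s\to\tau_0^-}\nu(s)>0$ (or one simply uses $\inf_{(0,\tau_0)}b\nu$ restricted suitably); combining with $\int_x^y\nu\le\nu^+(y-x)$ gives the bound with constant $\nu^+b^+/(b^-\nu(\tau_0^-))$. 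If instead $y-x>\tau_0$ then $N(y-x)\ge N(\tau_0)$ is bounded below by a fixed constant while $\int_x^y\nu(s)\,ds\le N(T)<\infty$, so the inequality is trivial with constant $N(T)/N(\tau_0)$.

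The main obstacle I anticipate is precisely the bookkeeping in the borderline subcase $x<\tau_0<y$ with $y-x$ small: one must not lose the lower bound on $N(y-x)$, and the cleanest way is to observe that near $0$ the function $N(t)/t$ is bounded below (indeed $N(t)/t\ge \frac{b^-}{b^+}\cdot\frac{1}{t}\int_0^t b\nu \ge \frac{b^-}{b^+}(b\nu)(t)$, which for $t\le\tau_0$ stays $\ge$ a positive constant since $b\nu$ is monotone hence has a positive lower bound on compact subintervals away from $0$ --- and in fact $b\nu$ being decreasing it is $\ge (b\nu)(\tau_0)>0$ on all of $(0,\tau_0)$), so $N(y-x)\gtrsim (y-x)\gtrsim \int_x^y\nu(s)\,ds$ using the uniform upper bound $\nu^+$ on the compact set $[\tau_0/2,T]\supseteq[x,y]\cap[\tau_0/2,\infty)$, together with the first-case estimate on $[x,y]\cap(0,\tau_0)$. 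Taking the maximum of the finitely many constants produced in the subcases gives $c(b(\cdot),\nu(\cdot),T)$ and completes the proof.
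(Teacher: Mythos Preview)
Your proposal is correct and follows essentially the same strategy as the paper's proof: a case split governed by the position of $[x,y]$ relative to $\tau_0$, using monotonicity of $b\nu$ on $(0,\tau_0)$ for the left region and continuity/boundedness of $\nu$ on a compact tail for the right region, together with a sub-split on whether $y-x\lessgtr\tau_0$ to control $N(y-x)$ from below.

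The one noteworthy difference is organizational. The paper treats three cases --- (i) $x<y\le\tau_0$, (ii) $\tau_0\le x<y$, (iii) $x<\tau_0<y$ --- and in case (iii) splits the integral at $\tau_0$ and invokes cases (i) and (ii) on the two pieces. You instead observe that whenever $y>\tau_0$ the constraint $x\ge y/2$ forces $x>\tau_0/2$, so $[x,y]\subseteq[\tau_0/2,T]$ and $\nu$ is uniformly bounded there by continuity; this lets you handle (ii) and (iii) in one stroke, at the cost of taking $\nu^+:=\max_{[\tau_0/2,T]}\nu$ rather than the paper's $\max_{[\tau_0,T]}\nu$. That is a genuine (if small) simplification, and it makes your final ``main obstacle'' paragraph largely redundant --- you have already covered the borderline subcase $x<\tau_0<y$ without needing to split the integral. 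Also, in case (i) the paper's translation argument $\int_x^y\nu(s)\,ds=\int_0^{y-x}\nu(s+x)\,ds\le\frac{b^+}{b^-}\int_0^{y-x}\nu(s)\,ds$ is marginally slicker than your pointwise bounds (and, incidentally, does not use the hypothesis $y/2\le x$), but both are perfectly valid.
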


\begin{proof}
We examine the three cases

\noindent
(i) $x<y\le \tau_0$

\noindent
(ii) $\tau_0\le x < y \le T$

\noindent
(iii) $x<\tau_0<y\le T$

\medskip

\noindent
CASE (i): Since $b\nu$ is decreasing in $(0, \tau_0)$,
$$
\int_x^y \nu(s)ds=\int_0^{y-x}\nu(s+x)ds
\le \frac{b^+}{b^-}\int_0^{y-x}\nu(s)ds
\qquad \forall x,y\in (0,\tau_0), \frac y2\le x<y.
$$

\noindent
CASE (ii): We have
\begin{equation}\label{secondstep}
\int_x^y \nu(s)ds\le \nu^+(y-x)=\frac{\nu^+}{\int_0^{y-x}\nu(s)ds}(y-x)\int_0^{y-x}\nu(s)ds
\qquad \forall x,y\in (\tau_0, T), \frac y2\le x<y,
\end{equation}
where $\nu^+=\displaystyle\max_{[\tau_{0},T]}\nu$.

There are two possibilities:

\noindent
$\textrm{(ii)}_1$ $y-x\le \tau_0$

\noindent
$\textrm{(ii)}_2$ $y-x > \tau_0$

In the case $\textrm{(ii)}_1$, since $b\nu$ is decreasing in $(0, \tau_0)$,
$$
\int_0^{y-x}\nu(s)ds\ge \frac{b^-}{b^+}\nu(\tau_0)(y-x) 
$$
and therefore from \eqref{secondstep}
$$
\int_x^y \nu(s)ds\le \frac{b^+}{b^-} \frac{\nu^+}{\nu(\tau_0)}\int_0^{y-x}\nu(s)ds
\qquad \forall x,y\in (\tau_0, T), \frac y2\le x<y;
$$ 
in the case $\textrm{(ii)}_2$, setting $\nu^-=\displaystyle\min_{]0,T]}\nu>0$,
$$
\int_0^{y-x}\nu(s)ds\ge \tau_0\nu^- 
$$
and then, using $y-x\le y/2\le T/2$, from \eqref{secondstep} we get
$$
\int_x^y \nu(s)ds\le \frac{\nu^+}{\tau_0\nu^-}\frac{T}{2}\int_0^{y-x}\nu(s)ds
\qquad \forall x,y\in (\tau_0, T), \frac y2\le x<y.
$$

\noindent
CASE (iii): We have
$$
\int_x^y \nu(s)ds=\int_x^{\tau_0} \nu(s)ds+\int_{\tau_0}^y \nu(s)ds
$$
and applying CASE (i) with $y$ replaced by $\tau_0$ and  CASE (ii) with $x$ replaced by $\tau_0$,
\[
\int_x^y \nu(s)ds\le \frac{b^+}{b^-}\int_0^{\tau_0-x} \nu(s)ds+
\max \left\{\frac{b^+}{b^-}\frac{\nu^+}{\nu(\tau_0)},\frac{\nu^+T}{2\tau_0\nu^-}\right\}\int_0^{y-\tau_0}\nu(s)ds.
\]
Since in our case $\tau_0<y$, it is $\tau_0-x<y-x$, hence the first term can be estimated by the right hand side of \eqref{thesissecondolemma}; similarly, since 
$x<\tau_0$, it is $y-\tau_0<y-x$ and the same conclusion holds for the second term.
\end{proof}

In next theorem we are going to consider an assumption on $\nu$ stronger (as we are going to see) with respect to that one 
of Lemma \ref{primolemma}: in the case $b\equiv 1$ (a similar digression can be done in the general case, replacing $\nu$ by $b\nu$) we will assume that the positive function $\nu\in AC(]0,T])\cap L^1(0,T)$ is such that
$x\to x^{1-\beta}\nu (x)$ is decreasing in $(0,\tau_0)$ for some $0<\tau_{0}\leq T$, $0<\beta<1-\alpha$, where $\alpha$ is a given number in $(0,1)$. It is easy to verify that this latter assumption implies that the function $N$, defined by \eqref{eq:int_funct_nu}, is such that 
\[
\frac{N(x)}{x^{\beta}}\searrow\,\, \textrm{in}\,\, (0, \tau_{0})\, ,
\]
and also that $\nu$ is decreasing in $(0,\tau_0)$ (because 
$\nu (x)=x^{\beta-1}\cdot x^{1-\beta}\nu (x)$ is product 
of positive decreasing functions).

Let us verify the first assertion.
Since $x\to x^{1-\beta}\nu(x)$ decreasing in $(0,\tau_0)$, it is (note that, since also
$\nu$ is absolutely continuous, their derivatives exist a.e.) 
$$
\sigma\frac{d}{d\sigma}\nu(\sigma)\leqslant (\beta-1)\nu(\sigma)\quad {\rm for}\,\,{\rm a.e.}\,\,\sigma\in (0,\tau_0)\, ,
$$ 
hence, integrating the above inequality in $(\varepsilon, x)$, where $0<\varepsilon<x<\tau_0$, and noting that $\sigma\nu(\sigma)$ is absolutely continuous too,
we have
$$
x\nu(x)-\varepsilon\nu(\varepsilon)=\int_\varepsilon^x\sigma\frac{d}{d\sigma}\nu(\sigma)d\sigma+
\int_\varepsilon^x\nu(\sigma)d\sigma$$
$$\le (\beta-1)
\int_\varepsilon^x\nu(\sigma)d\sigma+
\int_\varepsilon^x\nu(\sigma)d\sigma = \beta \int_\varepsilon^x\nu(\sigma)d\sigma\, .
$$
If we let $\varepsilon\to 0$, since $\nu$ is decreasing, it is
$$0<\varepsilon\nu(\varepsilon)\le\int_0^\varepsilon\nu(\sigma)d\sigma\to 0\, ,$$
and therefore we get
$$x\nu(x)\le \beta N(x)\,\quad \forall x\in (0,\tau_0) \, ,$$
from which the assertion follows.

On the other hand, the fact that the assumption is really stronger is shown by the following

\begin{example}
Let $1<\tau_0<\infty$, and let
$$
\nu(\sigma)=
\begin{cases}
\displaystyle\frac{1}{2\sqrt{\sigma}} \quad \textrm{if} \,\,\sigma\in (0,1)\\ \\
\displaystyle\frac{1}{2} \qquad\,\,\, \textrm{if} \,\,\sigma\in (1,\tau_0)
\end{cases}
$$
so that
$$
N(x)=
\begin{cases}
\sqrt{x} \quad\,\,\,\, \textrm{if} \,\,\,\, x\in (0,1)\\ \\
\displaystyle\frac{x+1}{2} \,\,\, \textrm{if} \,\,x\in (1,\tau_0)
\end{cases}
$$
Then 
$$
\frac{N(x)}{x^{\beta}}\searrow\,\, \textrm{in}\,\, (0,\tau_0)
$$
is satisfied for $\beta=\tau_0/(\tau_0+1)$, while for the same $\beta$ the function $x\to x^{1-\beta}\nu (x)$ is not decreasing in $(0,\tau_0)$ (because it is not decreasing in $(1,\tau_0)$).
\end{example}

Before the statement of the main theorem of this section, we observe that the kernels $\nu$ of our interest are such that their difference quotients are bounded above, i.e. there exists a constant $K>0$ such that
$$
\nu (y) - \nu(x) \le K (y-x) \qquad \forall x,y \in (0,T), x<y
$$
This property (which holds automatically, in particular, for all the kernels $\nu$ which are decreasing in the whole $(0,T)$) is expressed in an equivalent way in the assumption \eqref{diffquobdd} below.

\begin{teo}\label{casoholder}
If $g\in C^{0,\alpha}([0,T])$, $0<\alpha<1$, $0<T<\infty$, $g(0)=0$, and if 
$\nu\in AC(]0,T])\cap L^1(0,T)$, $\nu >0$, is such that
\begin{equation}\label{bastaquestaipotesi}
x\to x^{1-\beta}b(x)\nu (x) \,\,\textrm{is decreasing in} \,\,(0,\tau_0)
\end{equation}
$$
\textrm{for some}\,\, 0<\tau_{0}\leq T\, , 0<\beta<1-\alpha\, ,
\,\, b\in AC(]0,\tau_0[)\, ,\,
0<b^-\le b(x)\le b^+<\infty\, ,
$$
\begin{equation}\label{diffquobdd}
|\nu (x) - \nu(y)| \le  \nu (x) - \nu(y) + K_0 (y-x)\,\,\,\,\textrm{for some}\,\,K_0>0,\quad \forall x,y \in (0,T), x<y
\, ,
\end{equation}

then, setting
$$
J_\nu g(t)=\int_0^t \nu(t-s)g(s)ds\qquad t\in (0,T),
$$
it is 
\begin{equation}\label{thesis}
|J_\nu g(x)-J_\nu g(y)|\le c\left(b(\cdot),\nu(\cdot),T,\alpha,\beta\right) [g]_\alpha |x-y|^\alpha N(|x-y|)\,\, \forall x,y\in (0,T), \frac y2\le x<y\, ,
\end{equation}
where
$$
N(x)=\int_{0}^{x}\nu(\sigma)d\sigma\, .
$$
\end{teo}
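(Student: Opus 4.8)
The plan is to rewrite, via the substitution $u=t-s$,
\[
J_\nu g(t)=\int_0^t\nu(u)\,g(t-u)\,du ,
\]
set $h:=y-x$ (so that $0<h\le x$, since $\tfrac y2\le x$), and decompose
\[
J_\nu g(y)-J_\nu g(x)=\underbrace{\int_0^x\nu(u)\bigl[g(y-u)-g(x-u)\bigr]du}_{=:A}\;+\;\underbrace{\int_x^y\nu(u)\,g(y-u)\,du}_{=:B}.
\]
The term $B$ is harmless: for $u\in(x,y)$ one has $y-u\in(0,h)$, hence $|g(y-u)|=|g(y-u)-g(0)|\le[g]_\alpha h^\alpha$ by $g(0)=0$, while $\int_x^y\nu(u)\,du\le c\,N(h)$ by Lemma~\ref{secondolemma} (whose hypothesis is exactly $\tfrac y2\le x<y$); thus $|B|\le c\,[g]_\alpha h^\alpha N(h)$. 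The whole difficulty lies in $A$, and here one must avoid the naive pointwise estimate $|g(y-u)-g(x-u)|\le[g]_\alpha h^\alpha$: integrated against $\nu$ it only yields $[g]_\alpha h^\alpha N(x)$, and $N(x)/N(h)$ is unbounded as $h\to0$. Genuine cancellation is needed.

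The key device is to pass to the primitive of the increment. Put $\psi(u):=g(y-u)-g(x-u)$ and $\Psi(u):=\int_0^u\psi(r)\,dr$. A short computation shows that, for $u\ge h$,
\[
\Psi(u)=\int_x^{x+h}g(r)\,dr-\int_{x-u}^{x-u+h}g(r)\,dr ,
\]
and since the two intervals of integration are translates of one another by $u$, one gets the \emph{sharp} bound $|\Psi(u)|\le[g]_\alpha h\,u^\alpha$ for $u\ge h$; on the other hand $|\Psi(u)|\le[g]_\alpha h^\alpha u$ for every $u>0$ (directly from $|\psi|\le[g]_\alpha h^\alpha$), the sharp bound being the better one precisely on $\{u\ge h\}$. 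Because $\Psi\in C^1$ and $\nu\in AC(]0,T])$, one may integrate by parts on $(\varepsilon,x)$ and let $\varepsilon\to0$; the boundary contribution at the origin vanishes, since by \eqref{decreconderiv} $\nu(u)|\Psi(u)|\le\tfrac{b^+}{b^-}\tfrac{N(u)}{u}\,[g]_\alpha h^\alpha u=c\,[g]_\alpha h^\alpha N(u)\to0$. This gives (when $h<x$; the case $h=x$ is immediate)
\[
A=\nu(x)\Psi(x)-\int_0^x\nu'(u)\Psi(u)\,du ,
\]
the last integral being absolutely convergent because $\int_0^x u\,|\nu'(u)|\,du<\infty$ (using the monotonicity of $b\nu$ near the origin together with $\nu\in AC([\tau_0,T])$).

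It then remains to estimate the two terms. For $\nu(x)\Psi(x)$ one uses the sharp bound for $\Psi(x)$, together with $\nu(x)\le\tfrac{b^+}{b^-}\tfrac{N(x)}{x}$ (eq.~\eqref{decreconderiv}) and the fact that $x\mapsto N(x)/x^\beta$ decreases on $(0,\tau_0)$ (a consequence of \eqref{bastaquestaipotesi}): since $x\ge h$ and $\alpha+\beta<1$,
\[
\nu(x)|\Psi(x)|\le c\,[g]_\alpha\,h\,x^{\alpha-1}N(x)=c\,[g]_\alpha\,h\,x^{\alpha+\beta-1}\frac{N(x)}{x^\beta}\le c\,[g]_\alpha\,h\,h^{\alpha+\beta-1}\frac{N(h)}{h^\beta}=c\,[g]_\alpha h^\alpha N(h).
\]
For the integral one splits $\int_0^x=\int_0^h+\int_h^x$. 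On $(0,h)$ use the crude bound for $\Psi$ and $\int_0^h u\,|\nu'(u)|\,du\le N(h)$ (an elementary integration by parts using $b\nu\searrow$), which gives $\le c\,[g]_\alpha h^\alpha N(h)$. On $(h,x)$ use the sharp bound for $\Psi$, so that $\bigl|\int_h^x\nu'\Psi\bigr|\le[g]_\alpha h\int_h^x u^\alpha|\nu'(u)|\,du$, and one more integration by parts bounds $\int_h^x u^\alpha|\nu'(u)|\,du$ by $\nu(h)h^\alpha+\alpha\int_h^x\nu(u)u^{\alpha-1}\,du$; finally Corollary~\ref{usatodavvero}, after the change of variable $u=h\tau$, yields $\int_h^x\nu(u)u^{\alpha-1}\,du\le c\,h^{\alpha-1}N(h)$, while $\nu(h)h^\alpha\le\tfrac{b^+}{b^-}N(h)h^{\alpha-1}$ by \eqref{decreconderiv}. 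Collecting everything, $|A|\le c\,[g]_\alpha h^\alpha N(h)$, which with the bound for $B$ gives \eqref{thesis}. The portions of all the integrals above that lie over $(\tau_0,T)$ — where $b\nu$ need not be monotone — are handled by the crude estimates $\nu\le\nu^+$ on $[\tau_0,T]$ and $\int_{\tau_0}^T|\nu'|<\infty$ (this is where \eqref{diffquobdd} together with $\nu\in AC$ is used), the spare factor $h$ carried by those terms being absorbed through $h^{1-\alpha}\le c\,N(h)$ (valid since $N(h)\ge c_0h^\beta$ and $\beta<1-\alpha$); this casework, and the way the factor $b$ is carried along throughout, are of exactly the same nature as in the proofs of Lemmas~\ref{primolemma} and~\ref{secondolemma}.

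The main obstacle, as explained, is that no pointwise estimate of $g(y-u)-g(x-u)$ can work; the decisive step is the passage to the primitive $\Psi$ and the observation that $|\Psi(u)|\le[g]_\alpha h\,u^\alpha$ for $u\ge h$ — this is precisely the quantitative input, matched to the hypothesis $\beta<1-\alpha$, that turns each resulting integral into the right power of $h$ times $N(h)$ rather than the useless $N(x)$. Everything else (vanishing of the singular boundary term, the second integration by parts, the reduction to Corollary~\ref{usatodavvero}, and the monotone/non‑monotone splitting at $\tau_0$) is routine bookkeeping.
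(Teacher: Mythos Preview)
Your argument is correct and takes a genuinely different route from the paper's.

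\textbf{Comparison.} The paper does not split $J_\nu g(y)-J_\nu g(x)$ into your ``overlap $+$ tail'' pieces. Instead it adds and subtracts $g(y)$ to obtain three terms
\[
g(y)[N(y)-N(x)],\quad \int_0^{y-x}\nu(s)[g(y)-g(y-s)]\,ds,\quad \int_{y-x}^{y}[\nu(s-y+x)-\nu(s)][g(y)-g(y-s)]\,ds,
\]
so that the increment is shifted onto the \emph{kernel} rather than onto $g$. The hard term is the third one: the paper invokes \eqref{diffquobdd} to replace $|\nu(s-h)-\nu(s)|$ by $\nu(s-h)-\nu(s)+K_0h$, changes variables, integrates by parts in the ``long'' range, and lands on Corollary~\ref{usatodavvero}. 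Your approach keeps the increment on $g$ and exploits the primitive $\Psi(u)=\int_0^u[g(y-r)-g(x-r)]\,dr$ with the key bound $|\Psi(u)|\le [g]_\alpha\,h\,u^\alpha$ for $u\ge h$; after one integration by parts against $\nu'$ and a second one inside $\int_h^x u^\alpha|\nu'|$, you reach the same Corollary~\ref{usatodavvero} and Lemma~\ref{secondolemma}. Conceptually your route isolates the cancellation mechanism (the averaged increment $\Psi$) very cleanly, and the role of the hypothesis $\alpha+\beta<1$ is transparent in the single inequality $x^{\alpha+\beta-1}N(x)/x^\beta\le h^{\alpha+\beta-1}N(h)/h^\beta$. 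The paper's route, by contrast, makes the role of \eqref{diffquobdd} explicit from the outset and avoids any discussion of $\nu'$ near the origin.

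\textbf{One point to tighten.} In your two integration-by-parts steps you write $\int_0^h u|\nu'(u)|\,du\le N(h)$ and $\int_h^x u^\alpha|\nu'(u)|\,du\le \nu(h)h^\alpha+\alpha\int_h^x\nu(u)u^{\alpha-1}\,du$ ``using $b\nu\searrow$''. Monotonicity of $b\nu$ alone does not control $|\nu'|$; what is actually used is $|\nu'|\le -\nu'+2(\nu')^{+}\le -\nu'+2K_0$, which is exactly \eqref{diffquobdd}. This produces extra terms $K_0h^2$ and $cK_0T^{1+\alpha}$ respectively; both carry a spare factor $h$ and are absorbed, as you note at the end, via $h^{1-\alpha}\le cN(h)$. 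So nothing is wrong, but \eqref{diffquobdd} enters earlier than your write-up suggests, not only over $(\tau_0,T)$.
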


\begin{proof}
Let $x,y\in (0,T), \frac y2\le x<y$. It is
\begin{equation}\nonumber
\begin{split}
&J_\nu g(y)-J_\nu g(x)=
\int_0^y \nu(y-s)g(s)ds-\int_0^x \nu(x-s)g(s)ds\\
&=\int_0^y \nu(s)g(y-s)ds-\int_{y-x}^y \nu(s-y+x)g(y-s)ds\\
&=\int_0^y \nu(s)g(y)ds-\int_0^y \nu(s)[g(y)-g(y-s)]ds\\
&-\int_{y-x}^y \nu(s-y+x)g(y)ds+\int_{y-x}^y \nu(s-y+x)[g(y)-g(y-s)]ds\\
&=g(y)\int_0^y \nu(s)ds-\int_0^{y-x} \nu(s)[g(y)-g(y-s)]ds-\int_{y-x}^y \nu(s)[g(y)-g(y-s)]ds\\
&-g(y)\int_{y-x}^y \nu(s-y+x)ds+\int_{y-x}^y \nu(s-y+x)[g(y)-g(y-s)]ds\\
&=g(y)[N(y)-N(x)]-\int_0^{y-x} \nu(s)[g(y)-g(y-s)]ds\\
&+\int_{y-x}^y [\nu(s-y+x)-\nu(s)][g(y)-g(y-s)]ds.
\end{split}
\end{equation}
Therefore
\[
\begin{split}
&|J_\nu g(y)-J_\nu g(x)|\\
&\le|g(y)[N(y)-N(x)]|+\left\vert\int_0^{y-x} \nu(s)[g(y)-g(y-s)]ds\right\vert\\
&+\left\vert\int_{y-x}^y [\nu(s-y+x)-\nu(s)][g(y)-g(y-s)]ds\right\vert:=A+B+C.
\end{split}
\]
We estimate each term in turn. Since $g\in C^{0,\alpha}([0,T])$, $g(0)=0$, and since $N$ is increasing and $0<\alpha<1$,
\[
\begin{split}
&A=|g(y)[N(y)-N(x)]|\\
&\le [g]_\alpha y^\alpha [N(y)-N(x)]\\
&= [g]_\alpha (y^\alpha - x^\alpha) [N(y)-N(x)]+[g]_\alpha x^\alpha [N(y)-N(x)]\\
&\le [g]_\alpha (y - x)^\alpha [N(y)-N(x)]+[g]_\alpha x^\alpha [N(y)-N(x)].
\end{split}
\]
By \eqref{bastaquestaipotesi} the function $b\nu$ is decreasing in $(0,\tau_0)$, hence by \eqref{thesissecondolemma}
$$N(y)-N(x)=\int_0^y \nu(s)ds-\int_0^x \nu(s)ds=\int_x^y \nu(s)ds$$
$$\le\textit{c}\left(b(\cdot),\nu(\cdot),T\right)\int_0^{y-x} \nu(s)ds=\textit{c}\left(b(\cdot),\nu(\cdot),T\right)N(y-x)$$
and therefore
$$A\le \textit{c}\left(b(\cdot),\nu(\cdot),T\right)[g]_\alpha (y - x)^\alpha N(y-x)+[g]_\alpha x^\alpha [N(y)-N(x)].$$
The first term can be estimated by the right hand side of \eqref{thesis}. As to the second term, we begin observing that
$$x^\alpha [N(y)-N(x)]=x^\alpha \frac{N(y)-N(x)}{y-x}(y-x)
= x^\alpha \nu(\xi)(y-x),$$ 
for some $\xi\in (x,y)$.

We consider first the case $\xi\le \tau_0$, so that
$$x^\alpha \nu(\xi)(y-x)=
b(\xi)^{-1}x^\alpha b(\xi)\nu(\xi)(y-x)
\le \frac{1}{b^-} x^\alpha b(x)\nu(x)(y-x)$$ 
(because $b\nu$ is decreasing in $(0,\tau_0)$).
 Since $\alpha<1-\beta$, by \eqref{bastaquestaipotesi}
$$
x\to x^{\alpha}b(x)\nu (x) \,\,\textrm{is decreasing in} \,\,(0,\tau_0)
$$
and therefore, since $\tau_0\ge x\ge \frac{y}{2}\ge 
\frac{y-x}{2}$,
$$
x^\alpha b(x)\nu(x)(y-x)\le \left(\frac{y-x}{2}\right)^\alpha
b \left(\frac{y-x}{2}\right)
\nu \left(\frac{y-x}{2}\right)(y-x)$$
$$=
2^{1-\alpha}(y-x)^\alpha
b \left(\frac{y-x}{2}\right)
 \nu \left(\frac{y-x}{2}\right) \left(\frac{y-x}{2}\right)
$$
$$
=2^{1-\alpha}(y-x)^\alpha \int_0^{(y-x)/2}
b \left(\frac{y-x}{2}\right)
\nu \left(\frac{y-x}{2}\right) ds$$
$$\le 2^{1-\alpha}(y-x)^\alpha \int_0^{(y-x)/2}b (s)\nu (s) ds\le 2^{1-\alpha}b^+(y-x)^\alpha N(y-x)\, .
$$
Now we consider the case $\xi> \tau_0$. Setting
$
\overline{\nu}^+=\displaystyle\max_{x\in[\tau_{0},T]}x^\alpha\nu (x)
$, $
\overline{\nu}_-=\displaystyle\min_{x\in ]0,T]}x^\alpha\nu (x)>0
$, 
it is
$$
x^\alpha \nu(\xi)(y-x)\le \xi^\alpha \nu(\xi)(y-x)\le \overline{\nu}^+(y-x)
=\frac{\overline{\nu}^+}{\int_0^{y-x}s^\alpha\nu(s)ds}(y-x)\int_0^{y-x}s^\alpha\nu(s)ds\, .
$$
Now there are two possibilities: $y-x\le\tau_0$, $y-x >\tau_0$. In the first possibility,
$$
\int_0^{y-x}s^\alpha\nu(s)ds
\ge \frac{1}{b^+}\int_0^{y-x}s^\alpha b(s)\nu(s)ds
\ge \frac{1}{b^+}\tau_0^\alpha b(\tau_0)\nu(\tau_0)(y-x)
$$
and therefore
$$
\frac{\overline{\nu}^+}{\int_0^{y-x}s^\alpha\nu(s)ds}(y-x)\int_0^{y-x}s^\alpha\nu(s)ds\le 
\frac{b^+\overline{\nu}^+}{\tau_0^\alpha b(\tau_0)\nu(\tau_0)}\int_0^{y-x}s^\alpha\nu(s)ds
$$
$$
\le 
\frac{b^+\overline{\nu}^+}{\tau_0^\alpha b(\tau_0)\nu(\tau_0)}\int_0^{y-x}(y-x)^\alpha\nu(s)ds=
\frac{b^+\overline{\nu}^+}{\tau_0^\alpha b(\tau_0)\nu(\tau_0)}(y-x)^\alpha N(y-x)\, ;
$$
in the second possibility,
$$
\int_0^{y-x}s^\alpha\nu(s)ds\ge \tau_0\cdot \overline{\nu}_-
$$
and therefore, using $y-x\le \frac y2\le \frac T2$,
$$
\frac{\overline{\nu}^+}{\int_0^{y-x}s^\alpha\nu(s)ds}(y-x)\int_0^{y-x}s^\alpha\nu(s)ds\le 
\frac{\overline{\nu}^+}{\tau_0\overline{\nu}_-}\cdot \frac T2\int_0^{y-x}s^\alpha\nu(s)ds$$
$$\le 
\frac{T\overline{\nu}^+}{2\tau_0\overline{\nu}_-}(y-x)^\alpha N(y-x)\, .
$$
We have therefore shown that also the second term can be estimated by the right hand side of \eqref{thesis}.

On the other hand, it is quite easy to check that also $B$ can be estimated in the same way, in fact
$$B=\left\vert\int_0^{y-x} \nu(s)[g(y)-g(y-s)]ds\right\vert\le
\int_0^{y-x} \nu(s)[g]_\alpha s^\alpha ds$$ $$\le 
[g]_\alpha\int_0^{y-x} \nu(s) (y-x)^\alpha ds=
[g]_\alpha (y-x)^\alpha N(y-x).$$
It remains to estimate $C$.
It is 
$$C=\left|\int_{y-x}^y [\nu(s-y+x)-\nu(s)][g(y)-g(y-s)]ds\right|\le [g]_\alpha
\int_{y-x}^y |\nu(s-y+x)-\nu(s)|s^\alpha ds$$

\noindent 
We now use the assumption \eqref{diffquobdd}, so that
$$
C\le [g]_\alpha
\int_{y-x}^y \left[\nu(s-y+x)-\nu(s)\right]s^\alpha ds+[g]_\alpha\int_{y-x}^y K_0(y-x)s^\alpha ds\, .
$$

Making the change of variables $\displaystyle \sigma=\frac{s}{y-x}$ in the first term, we have 

$$C \le [g]_\alpha (y-x)^{\alpha+1}
\int_{1}^{\frac{y}{y-x}}\left[ \nu((y-x)(\sigma-1)) - \nu((y-x)\sigma)\right]\sigma^\alpha d\sigma+[g]_\alpha\int_{y-x}^y K_0(y-x)s^\alpha ds$$ 

\noindent We observe that $\displaystyle x\geqslant \frac{y}{2}>\frac{x}{2}$ implies $\displaystyle \frac{y}{y-x}\geqslant 2$, therefore
\[
\begin{split}
C &\le [g]_\alpha (y-x)^{\alpha+1}\left\{
\int_{1}^{2}\left[ \nu((y-x)(\sigma-1)) - \nu((y-x)\sigma)\right]\sigma^\alpha d\sigma+\right.\\&+ \left.\int_{2}^{\frac{y}{y-x}}\left[ \nu((y-x)(\sigma-1)) - \nu((y-x)\sigma)\right]\sigma^\alpha d\sigma
\right\}+[g]_\alpha\int_{y-x}^y K_0(y-x)s^\alpha ds
\\
&:=C_{1}+C_{2}+C_{3}\end{split}
\]

We observe that the sign of $C_1$ and the sign of $C_2$ are not necessarily positive; both $C_1$ and $C_2$ will be splitted into more terms, each of them being not necessarily positive; however, all of them will be shown to be smaller than
the right hand side of \eqref{thesis}.

The first term can be estimated as follows:

\[
\begin{split}
&C_{1}=[g]_\alpha (y-x)^{\alpha+1}
\int_{1}^{2}\left[ \nu((y-x)(\sigma-1)) - \nu((y-x)\sigma)\right]
\sigma^\alpha d\sigma\\
&\leqslant 2^{\alpha}[g]_\alpha (y-x)^{\alpha+1}
\int_{1}^{2}\nu((y-x)(\sigma-1)) d\sigma
\\
&=2^{\alpha}[g]_\alpha (y-x)^{\alpha}
\int_{0}^{y-x}\nu(\tau) d\tau
\\
&= 2^{\alpha}[g]_\alpha (y-x)^{\alpha}
N(y-x)\, .
\end{split}
\]

In order to estimate $C_2$, we need to use the following inequality, which, as we are going to see, follows easily from the fact that the positive $b\nu\in AC(]0,T])$ is decreasing in $(0,\tau_0)$:
\begin{equation}\label{weakdecre}
\frac{1}{y-x}\int_x^y\nu (s)ds\le \left(\frac{b^+}{b^-}+\frac{\nu^+}{\nu^-}\right)\nu(x)\qquad \forall x,y \in (0,T), x<y
\, ,
\end{equation}
where $\nu^+=\displaystyle\max_{[\tau_{0},T]}\nu$ and $\nu^-=\displaystyle\min_{[\tau_{0},T]}\nu>0$.

In order to show \eqref{weakdecre}, let us fix $x,y \in (0,T), x<y$. If $y\le\tau_0$, using that $b\nu$ is decreasing in $(0,\tau_0)$, we have
$$
\frac{1}{y-x}\int_x^y\nu (s)ds\le
\frac{1}{b^-}\frac{1}{y-x}\int_x^y b(s)\nu (s)ds
\le \frac{1}{b^-} b(x)\nu (x)\le  \left(\frac{b^+}{b^-}+\frac{\nu^+}{\nu^-}\right)\nu(x).
$$
If $\tau_0\le x$, 
$$
\frac{1}{y-x}\int_x^y\nu (s)ds\le \nu^+ = \frac{\nu^+}{\nu^-}\nu^- \le 
\frac{\nu^+}{\nu^-}\nu (x)\le \left(\frac{b^+}{b^-}+\frac{\nu^+}{\nu^-}\right)\nu(x).
$$
The last case is $x<\tau_0<y$, where the following inequalities hold:
$$
\frac{1}{y-x}\int_x^y\nu (s)ds=\frac{1}{y-x}\int_x^{\tau_0}\nu (s)ds+\frac{1}{y-x}\int_{\tau_0}^y\nu (s)ds$$
$$\le\frac{1}{\tau_0-x}\int_x^{\tau_0}\nu (s)ds+\frac{1}{y-\tau_0}\int_{\tau_0}^y\nu (s)ds\le \frac{b^+}{b^-}\nu (x)+\frac{\nu^+}{\nu^-}\nu (x)=\left(\frac{b^+}{b^-}+\frac{\nu^+}{\nu^-}\right)\nu(x)\, .
$$

Using the change of variable $\sigma=\tau+1$, the integration by parts formula, $\nu >0$, \eqref{weakdecre}
and Lemma \ref{secondolemma}, we now estimate $C_2$ as follows:

\begin{equation*}
\begin{split}
&C_{2}=[g]_\alpha (y-x)^{\alpha+1}
\int_{2}^{\frac{y}{y-x}}\left[ \nu((y-x)(\sigma-1)) - \nu((y-x)\sigma)\right]\sigma^\alpha d\sigma\\
&=[g]_\alpha (y-x)^{\alpha+1}
\int_{1}^{\frac{x}{y-x}}\left[ \nu((y-x)\tau) - \nu((y-x)(\tau+1))\right](\tau+1)^\alpha d\tau\\
&=[g]_\alpha (y-x)^{\alpha+1}
\int_{1}^{\frac{x}{y-x}}(\tau+1)^\alpha d\left(\frac{1}{y-x}\int_{(y-x)(\tau+1)}^{(y-x)\tau}\nu (s)ds\right)\\
&=[g]_\alpha (y-x)^{\alpha+1}
\left\{
\left.
\frac{(\tau+1)^\alpha}{y-x}\int_{(y-x)(\tau+1)}^{(y-x)\tau}\nu (s)ds
\right\vert^{\tau=\frac{x}{y-x}}_{\tau=1}\right.\\
&\qquad\qquad\qquad\qquad\qquad\qquad\qquad-\left.
\frac{1}{y-x}
\int_{1}^{\frac{x}{y-x}}\left(\int_{(y-x)(\tau+1)}^{(y-x)\tau}\nu (s)ds\right)\alpha (\tau+1)^{\alpha-1}d\tau
\right\}\\
&=[g]_\alpha (y-x)^{\alpha}
\left\{
\left.
(\tau+1)^\alpha\int_{(y-x)(\tau+1)}^{(y-x)\tau}\nu (s)ds
\right\vert^{\tau=\frac{x}{y-x}}_{\tau=1}\right.\\
&\qquad\qquad\qquad\qquad\qquad\qquad\qquad+\left.
\int_{1}^{\frac{x}{y-x}}\left(\int_{(y-x)\tau}^{(y-x)(\tau+1)}\nu (s)ds\right)\alpha (\tau+1)^{\alpha-1}d\tau
\right\}\\
&=[g]_\alpha (y-x)^{\alpha}
\left\{
\left(\frac{y}{y-x}\right)^{\alpha}\int_{y}^{x}\nu(s)ds-2^{\alpha}\int_{2(y-x)}^{y-x}\nu(s)ds\right.\\
&\qquad\qquad\qquad\qquad\qquad\qquad\qquad+\left.
\int_{1}^{\frac{x}{y-x}}\left[\int_{(y-x)\tau}^{(y-x)(\tau+1)}\nu (s)ds\right]\alpha (\tau+1)^{\alpha-1}d\tau
\right\}\\
&\leqslant [g]_\alpha (y-x)^{\alpha}\left\{2^{\alpha}\int_{y-x}^{2(y-x)}\nu(s)ds
+c\left(b(\cdot),\nu(\cdot)\right)
\int_{1}^{\frac{x}{y-x}}\left[
(y-x)\nu ((y-x)\tau)
\right]\alpha (\tau+1)^{\alpha-1}d\tau
\right\}\\
&\leqslant [g]_\alpha (y-x)^{\alpha}\left\{2^{\alpha}c\left(b(\cdot),\nu(\cdot),T\right)N(y-x)\right.\\
&\qquad\qquad\qquad+\left.
 c\left(b(\cdot),\nu(\cdot)\right)(y-x)\alpha\frac{N(y-x)}{y-x}\int_{1}^{\frac{y}{y-x}}(\tau+1)^{\alpha-1}\frac{\nu((y-x)\tau)\,(y-x)}{N(y-x)}d\tau\right\}
\end{split}
\end{equation*}

\noindent
By Corollary \ref{usatodavvero} the integral inside the parenthesis is bounded by a constant independent of $y-x$, depending only on $b(\cdot),\nu(\cdot),T,\alpha+\beta$. Hence the estimate becomes
$$C_{2}\le [g]_\alpha (y-x)^{\alpha}\left\{2^{\alpha}c\left(b(\cdot),\nu(\cdot),T\right)N(y-x) + \alpha \textit{c}\left(b(\cdot),\nu(\cdot),T,\alpha+\beta\right) N(y-x) \right\}$$
and therefore also $C_2$ is estimated by the right hand side of \eqref{thesis}.

Finally, we need to estimate $C_3$.

We have 
$$C_3=[g]_\alpha\int_{y-x}^y K_0(y-x)s^\alpha ds\le [g]_\alpha K_0 \int_0^T s^\alpha ds (y-x)$$
$$=\textit{c}\left(\nu(\cdot),T,\alpha\right)[g]_\alpha(y-x)=\textit{c}\left(\nu(\cdot),T,\alpha\right)[g]_\alpha
\frac{(y-x)^{1-\alpha}}{N(y-x)}(y-x)^{\alpha}N(y-x)$$
$$\le 
\textit{c}\left(b(\cdot),\nu(\cdot),T,\alpha\right)[g]_\alpha(y-x)^{\alpha}N(y-x)
$$
where the last inequality follows from the fact that from \eqref{bastaquestaipotesi} and 
from $\beta<1-\alpha$ it follows that 
$$
\frac{1}{x^{1-\alpha}}\int_0^x b(s)\nu(s)ds\searrow\,\, \textrm{in}\,\, (0, \tau_{0})\, ,
$$
hence $N(x)/x^{1-\alpha}$ is bounded below in $(0,T)$ by a positive constant, i.e. its reciprocal is bounded above by a positive constant (depending only on $b$, $\nu$ and $\alpha$).
\end{proof}

\begin{rem}
In the case $b\equiv 1$, $\nu(\sigma)=\sigma^{\beta-1}$, $\tau_{0}= T$, Theorem \ref{casoholder} gives back Theorem 4.2.1 in \cite{gv}. Another interesting case is 
\begin{equation}\label{exeimportante}
\nu(\sigma)=\frac{1}{\sigma(\log \left(1/\sigma\right))^2}\, ,\, \sigma \,\, \textrm{small}
\end{equation}
which satisfies the assumption \eqref{bastaquestaipotesi} of Theorem \ref{casoholder} for \it any $0<\alpha<1$, \rm for \it any $0<\beta<1-\alpha$, \rm with $b\equiv 1$. \rm Of course those positive functions $\nu\in AC(]0,T])\cap L^1(0,T)$, which are just \it equivalent \rm to the right hand side of 
\eqref{exeimportante} only in a neighborhood of the origin, and then not blowing up ``too much'' (as, for instance, the function $\I(t)$ in 
\eqref{eq:i}, whose derivative -- see \eqref{derivat} -- is again a Volterra function which is bounded above), are examples for Theorem \ref{casoholder} and in such cases the resulting regularity for $J_\nu$ is the same as that one given for \eqref{exeimportante}.
\end{rem}

\begin{rem}
From the proof of Theorem \ref{casoholder} it is clear that the assumption \eqref{diffquobdd} can be weakened as follows:
$$
|\nu (x) - \nu(y)| \le  \nu (x) - \nu(y) + K_0 (y-x)^{\alpha}N(y-x)\,\,\textrm{for some}\,\,K_0>0,\, \forall x,y\in (0,T), \frac y2\le x<y.$$
\end{rem}

\begin{rem} For a given $\nu$ satisfying the assumptions of Theorem \ref{casoholder} one may look for the \it best \rm regularity action for $J_\nu$, i.e. one may look for the greatest $\alpha$ satisfying the assumptions of the theorem or, equivalently, for the smallest $\beta$ satisfying \eqref{bastaquestaipotesi} (in the case of 
the classical spaces of H\"older continuous functions,
the inclusions between the spaces are easy and well known; for a recent book on this topic see \cite{refioren}): this problem is linked to the notion of Boyd indices (see e.g. \cite{Maligr}). For a short survey including a bibliography on this topic, and for a ``concrete'' way to compute them for explicit examples, see e.g. \cite{fiokrb1},\cite{fiokrb2}.
\end{rem}

\begin{rem}
It is interesting to note that in the paper \cite{SM} (see also \cite{cs}) the authors prove a result of the same type as Theorem 3.1, where kernels more general than powers are considered. However, in \cite{SM} the assumption to belong to a certain class $V_\lambda$, $0<\lambda<1$, implies that the kernel enjoys a higher integrability property (in fact, if $k\in V_\lambda$, it is $k(x)\le c\, x^{-\lambda}$ around zero), hence kernels like our model example $\I$, discussed in Section \ref{sec-volt}, cannot be considered.
\end{rem}


\section{Regularization in $L^{p}$ spaces}\label{sectionellepi}

\noindent
We begin some background on Young's functions and Orlicz spaces. In the following a convex function $A$ defined on $[0,\infty[$ is said to be a Young's function if it is convex and such that $A(0)=0$, $A(x)>0$ for $x>0$. This assumption implies that Young's functions are strictly increasing and invertible, so that it makes sense to consider its inverse $A^{-1}$, defined in $[0,\infty[$. The Orlicz space $L^A(0,T)$ (here $T$ is a fixed positive real number) is the Banach function space of all real-valued (Lebesgue) measurable functions $f$ on $(0,T)$ such that 
$$
\|f\|_{L^A(0,T)}=\|f\|_{A}:=\inf \left\{ \lambda>0 \, : \, \int_0^T A\left(\frac{|f(t)|}{\lambda}\right)dt\le 1 \right\}<\infty 
$$
(here we use the convention $\inf\emptyset=+\infty$). In the special case $A(t)=t^p$, $1\le p<\infty$, the Orlicz space reduces to the familiar Lebesgue space. For essentials about Orlicz spaces and Banach function spaces the reader may refer to \cite{CF}, Sections 2.10.2 and 2.10.3 (and references therein for extensive treatments). A well known result of the theory is that if two Young's functions $A$, $B$ are such that
$$
A(c_1x)\le B(x) \le A(c_2x) \qquad \textrm{for} \,\, x\,\, \textrm{large}\, ,
$$
then, in spite $\|\cdot\|_A$, $\|\cdot\|_B$ may be different, the spaces themselves (namely, the set of the functions such that the norms are finite) coincide. In particular, the spaces are completely determined by the values of the Young's functions assumed for $x$ large.

For measurable functions $f\not\equiv 0$ on $(0,T)$, the decreasing rearrangement $f^*$ is defined by the right continuous inverse of $t\to \mu(t)=meas(s\in (0,T) \, : |f(s)|>t)$, i.e. $f^*(t)=\inf\{ \lambda >0 \, : \mu(\lambda)>t \}$. Orlicz spaces $L^A(0,T)$ are rearrangement-invariant: this means, in particular, that the norm is not affected after the action of the decreasing rearrangement operator: $\|f\|_A=\|f^*\|_A$.

We may state the following

\begin{teo}\label{casoLp}
Let $1<p<\infty$, $0<T<\infty$, and let $A$ be a Young's function. 
If $\nu\in L^1(0,T)$, $\nu >0$, is such that
\begin{equation}\label{ipotesidecrescente}
x\to \nu (x) \,\,\textrm{is decreasing in} \,\,(0,\tau_0) \,\,\textrm{for some}\,\, 0<\tau_{0}\leq T\, ,
\end{equation}
\begin{equation}\label{ipotesilimitato}
x\to \nu (x) \,\,\textrm{is bounded in} \,\,(\tau_0, T) \quad \textrm{(in the case $\tau_0<T$)}\, ,
\end{equation}
\begin{equation}\label{marcin}
N(t)\le c_{\nu} t A^{-1}\left(\frac1t\right)
\,\,\,\,\textrm{for some}\,\,c_\nu>0,\quad \forall t \in (0,\tau_0)
\, ,
\end{equation}
where
$$
N(t)=\int_{0}^{t}\nu(\sigma)d\sigma\, ,
$$
then, setting
$$
J_\nu g(t)=\int_0^t \nu(t-s)g(s)ds\qquad t\in (0,T),
$$
it is 
\begin{equation}\label{thesis4Lp}
\| J_\nu g\|_C\le c\left(\nu(\cdot),A(\cdot), p,T\right) \|g\|_p\, ,
\end{equation}
where $C$ is the Young's function (Proof of Lemma 4.2 in \cite{oneilok}) defined by 
\begin{equation}\label{eccoC}
C^{-1}(x)=\int_0^x t^{-2+\frac{1}{p}}A^{-1}(t)dt,\quad x\geq0.
\end{equation}
\end{teo}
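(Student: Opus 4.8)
The plan is to realize $J_\nu g$ as the restriction of a convolution on $\R$, apply O'Neil's rearrangement inequality for convolutions, and feed in pointwise bounds on the decreasing rearrangement of $\nu$ that follow from \eqref{ipotesidecrescente}--\eqref{marcin}; the passage from the resulting pointwise estimate to the Orlicz-norm bound is then exactly the computation in the proof of Lemma 4.2 of \cite{oneilok}, and \eqref{eccoC} enters there for the same reason. Since $|J_\nu g|\le J_\nu|g|$ and $\||g|\|_p=\|g\|_p$, we may assume $g\ge 0$; extending $\nu$ and $g$ by $0$ outside $(0,T)$ to $\tilde\nu,\tilde g$ on $\R$, the convolution $\tilde\nu*\tilde g$ is supported in $(0,2T)$ and coincides with $J_\nu g$ on $(0,T)$, so it suffices to bound $\|\tilde\nu*\tilde g\|_{L^C(0,2T)}$. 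It is also convenient to write $B(t)=t^p$, so that $B^{-1}(t)=t^{1/p}$ and \eqref{eccoC} reads $C^{-1}(x)=\int_0^x A^{-1}(t)\,B^{-1}(t)\,t^{-2}\,dt$; in the power model this recovers the exponent of Young's convolution inequality, hence the classical gain $L^p\to L^{p/(1-\alpha p)}$ for the Abel kernel recalled in the Introduction.

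The first real step is the rearrangement estimate $\tilde\nu^{**}(t)\le c\,A^{-1}(1/t)$ for $t\in(0,2T)$, where $h^{**}(t):=\frac1t\int_0^t h^*(s)\,ds$ denotes the maximal rearrangement. For $t\le\tau_0$ one splits an arbitrary measurable $E$ with $|E|=t$ into $E\cap(0,\tau_0)$ and $E\cap(\tau_0,T)$ and uses the monotonicity \eqref{ipotesidecrescente} on the first piece and the boundedness \eqref{ipotesilimitato} on the second, getting $\int_0^t\tilde\nu^*=\sup_{|E|=t}\int_E\nu\le N(t)+t\,\sup_{(\tau_0,T)}\nu$; then \eqref{marcin} together with the elementary inequality $A^{-1}(1/t)\ge A^{-1}(1/\tau_0)>0$ (valid because $A^{-1}$ is increasing and $t\le\tau_0$) gives the claim on $(0,\tau_0)$. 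For $t\ge\tau_0$ the map $t\mapsto\tilde\nu^{**}(t)$ is nonincreasing and finite (as $\nu\in L^1$), while $A^{-1}(1/t)\ge A^{-1}(1/(2T))>0$, so the bound propagates to $(0,2T)$ after enlarging the constant. In particular $\tilde\nu^*(t)\le\tilde\nu^{**}(t)\le c\,A^{-1}(1/t)$, i.e.\ $\nu$ belongs to the Marcinkiewicz space built on $A$.

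Next I would invoke O'Neil's pointwise inequality
\[
(\tilde\nu*\tilde g)^{**}(t)\le t\,\tilde\nu^{**}(t)\,\tilde g^{**}(t)+\int_t^{\infty}\tilde\nu^*(s)\,\tilde g^*(s)\,ds .
\]
The mechanism is visible already by inserting the crude bounds $\tilde g^*(s)\le\|g\|_p\,s^{-1/p}$ and $\tilde g^{**}(t)\le\frac{p}{p-1}\|g\|_p\,t^{-1/p}$ together with the rearrangement estimate just proved: since $\tilde\nu^*$ vanishes on $(T,\infty)$, the integral term is $\le c\|g\|_p\int_t^{T}s^{-1/p}A^{-1}(1/s)\,ds$, and the substitution $u=1/s$ turns it, by \eqref{eccoC}, into $c\|g\|_p\bigl(C^{-1}(1/t)-C^{-1}(1/T)\bigr)\le c\|g\|_p\,C^{-1}(1/t)$; the first term equals $c\|g\|_p\,t^{1-1/p}A^{-1}(1/t)$, which is also $\le c\|g\|_p\,C^{-1}(1/t)$ because the concavity of $A^{-1}$ gives $C^{-1}(1/t)\ge\int_{1/(2t)}^{1/t}u^{-2+1/p}A^{-1}(u)\,du\ge c\,t^{1-1/p}A^{-1}(1/t)$. (Here one needs the integral in \eqref{eccoC} to converge near $0$ for $C$ to be a genuine Young's function; only the behaviour of $C$ at infinity affects the Orlicz space on $(0,2T)$, so this is harmless, but I would flag it.)

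This argument only yields $(\tilde\nu*\tilde g)^{**}(t)\le c\|g\|_p\,C^{-1}(1/t)$, which controls $\tilde\nu*\tilde g$ in the Marcinkiewicz space of $C$, in general strictly larger than $L^C$. To land in $L^C$ itself one must \emph{not} replace $\tilde g^*$ by its pointwise majorant inside the integral but rather estimate $\int_0^{2T}C\bigl(c^{-1}(\tilde\nu*\tilde g)^{**}(t)\bigr)\,dt$ directly, exploiting that $g$ is genuinely in $L^p$ through a Hardy-type inequality; this is precisely the computation carried out in the proof of Lemma 4.2 of \cite{oneilok}, which I would transcribe, the only new inputs being the reduction above and the rearrangement bound $\tilde\nu^{**}(t)\le cA^{-1}(1/t)$ of the second step. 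I therefore expect the main difficulty not to be a single hard estimate but the bookkeeping: making the localized, one-sided hypotheses \eqref{ipotesidecrescente}--\eqref{marcin} interface cleanly with O'Neil's machinery (stated for genuine Orlicz/Lorentz membership on the whole space) and tracking the dependence of the final constant on $\nu$, $A$, $p$ and $T$.
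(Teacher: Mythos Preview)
Your plan is correct and would yield \eqref{thesis4Lp}, but the paper proceeds more economically by quoting a ready-made result rather than reconstructing it. Specifically, the paper invokes Sharpley's fractional-integration theorem in Orlicz spaces (\cite[Theorem~3.8]{Shar}), stated in the text as Theorem~\ref{bysharpley}: if $B$ satisfies $xB'(x)\le c_B B(x)$ and $C$ (defined via $1/(xC^{-1}(x))=1/(A^{-1}(x)B^{-1}(x))$) satisfies $c_C C(x)\le xC'(x)$, then
\[
\|J_\nu g\|_C\le c_{A,B}\sup_x\frac{\nu^{**}(x)}{A^{-1}(1/x)}\,\|g\|_B.
\]
With $B(t)=t^p$ the first hypothesis holds with $c_B=p$, the relation between $C$ and $A$ becomes exactly \eqref{eccoC}, and the paper checks $c_C=p$ by observing that $A^{-1}(x)/x$ decreasing forces $C(x)/x^p$ increasing. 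The only remaining task is to bound the supremum, and here the paper's argument coincides with your second step: from \eqref{ipotesidecrescente}--\eqref{ipotesilimitato} one has $\nu^{**}(x)=N(x)/x$ for small $x$, and then \eqref{marcin} gives $\nu^{**}(x)/A^{-1}(1/x)\le c_\nu$ there, which extends to all of $(0,T)$ by monotonicity and positivity.

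The difference, then, is purely one of packaging. Sharpley's theorem already encapsulates O'Neil's pointwise convolution inequality together with the Hardy-type passage from the weak estimate to the strong $L^C$ bound that you correctly identified as the nontrivial step; by citing it, the paper avoids transcribing the computation from the proof of \cite[Lemma~4.2]{oneilok}. Your route has the advantage of being self-contained and of making the mechanism visible (in particular your diagnosis that the crude substitution $\tilde g^*(s)\le\|g\|_p s^{-1/p}$ only yields Marcinkiewicz control is exactly right), at the cost of reproducing an argument already in the literature. Your rearrangement bound $\tilde\nu^{**}(t)\le cA^{-1}(1/t)$ is in fact slightly more carefully argued than the paper's, which asserts $\nu(t)=\nu^*(t)$ for small $t$ without explicitly handling the contribution from $(\tau_0,T)$.
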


Before giving the proof of the theorem, which is a quite easy consequence (in fact, an application) of a classical result about fractional integration in Orlicz spaces, we highlight a couple of examples which are relevant for this paper.

\begin{example} Let $1<p<\infty$ and $0<\alpha<1/p$, and let $\nu(s)=s^{\alpha-1}\in L^{r}(0,T)\subset L^1(0,T)$, for all $1<r<1/(1-\alpha)$. Then $A(x)=x^{1/(1-\alpha)}$ satisfies \eqref{marcin}, hence $C$ given by \eqref{eccoC} is $C(x)=x^{p/(1-\alpha p)}$, and $J_{\nu}g\in L^{p/(1-\alpha p)}$ for all $g\in L^p$. This special case gives back the refined version of the continuity property for the Abel operator in $L^p$ spaces, see
Theorem 4.1.3 in \cite{gv}.
\par
Notice that when $\alpha$ approaches $0$, the exponent $r$ of integrability of $\nu$ approaches $1$, and the exponent $p/(1-\alpha p)$ of integrability of $J_{\nu}g$ approaches $p$, which means no gain of integrability: in the framework of the Lebesgue spaces, kernels in $L^1$ which do not possess the higher integrability property (see e.g. next two examples and, in particular, the Volterra function $\I(t)$) are not able to improve the integrability through the operator $J_{\nu}$.
\end{example}

\begin{example} Let $1<p<\infty$ and let $\beta>1$, and let $\nu(s)=\frac{1}{s\log^\beta \left(\frac1s\right)}\in L^1(0,1/2)$. Then 
$$
N(t)=\int_{0}^{t}  \frac{1}{\sigma \log^\beta \left(\frac1\sigma\right)}d\sigma = \frac{1}{\beta-1}\log^{1-\beta} \left(\frac1t\right)\, ,
$$
hence
$$
A(x)\approx x\log^{\beta-1}x \qquad \textrm{for} \,\, x\,\, \textrm{large}
$$
satisfies \eqref{marcin}. The Young's function $C$ given by \eqref{eccoC} is 
$$
C(x)\approx x^p\log^{p(\beta-1)}x \qquad \textrm{for} \,\, x\,\, \textrm{large}\, ,
$$
and $J_{\nu}g\in L^{C}$ for all $g\in L^p$. 
It is interesting to note that the kernel does not belong to any Lebesgue space $L^p$ with $p>1$, and that the logarithm in the expression of $C$ (which has a positive power and therefore it is divergent at infinity) represents an Orlicz gain of integrability for $J_{\nu}g$.
\end{example}

For any kernel considered in Theorem \ref{casoLp}, the existence of a Young function $A$ satisfying \eqref{marcin} can be easily established; moreover, for any $\nu$ 
the function $J_{\nu}g$ always enjoys an Orlicz gain of integrability with respect to $g$: this is the heart of the following simple result, which is consequence of standard statements of Orlicz spaces theory, namely, of the fact that 
any function $L^1(0,T)$ is always in some Orlicz space 
$L^\Psi(0,T)$ strictly contained in $L^1(0,T)$ (see e.g. \cite{krasnorut}, p.60), of the H\"older's inequality in Orlicz spaces (see e.g. \cite{adams}, 8.11 p. 234)
\begin{equation}\label{holdOrlicz}
\int_0^Tfg\, ds\le 2\|f\|_\Psi\|g\|_{\tilde \Psi}
\end{equation}
where $\tilde \Psi$ is the Young function defined by $\tilde \Psi(s)=\max_{t\ge 0}(st-\Psi(t))$, of the equivalences (see e.g. \cite{adams}, (7) p. 230 and \cite{Shar}, respectively)
\begin{equation}\label{equiv4tildeA}
(\tilde \Psi)^{-1}(t)\approx \frac{t}{\Psi^{-1}(t)}\, \quad t>0
\end{equation}
\begin{equation}\label{fundfunct}
\|\one_{(0,t)}\|_\Psi= \frac{1}{\Psi^{-1}(1/t)}\, \quad 0<t\le T\, ,
\end{equation}
and finally of 
\begin{equation}\label{inclusionOrlicz}
L^C(0,T) \,\,\subset\,\, L^\Psi(0,T) \,\,\Leftrightarrow\,\, \Psi(t)\le C(kt) \,\,\textrm{for some}\,\, k>0, \,\,\textrm{for}\,\, t \,\,\textrm{large}\, .
\end{equation}

\begin{pro}
In the assumptions of Theorem \ref{casoLp}, for every $\nu$ there exists a Young function $A$ satisfying \eqref{marcin}, and therefore $L^C(0,T)$ is strictly contained in $L^p(0,T)$.
\end{pro}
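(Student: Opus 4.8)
The plan is to \emph{choose} $A$ to be the Young function furnished by the classical theorem of Krasnosel'ski\u\i{} and Ruticki\u\i{} (\cite{krasnorut}, p.~60): since $\nu\in L^1(0,T)$, there is a Young function $\Psi$ with $\nu\in L^\Psi(0,T)$ and $L^\Psi(0,T)$ \emph{strictly} contained in $L^1(0,T)$, and I would simply set $A:=\Psi$. To verify \eqref{marcin}, write $N(t)=\int_0^T\nu(\sigma)\one_{(0,t)}(\sigma)\,d\sigma$ and apply H\"older's inequality in Orlicz spaces \eqref{holdOrlicz} with the conjugate pair $(\Psi,\tilde\Psi)$, getting $N(t)\le 2\|\nu\|_\Psi\,\|\one_{(0,t)}\|_{\tilde\Psi}$. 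By the formula \eqref{fundfunct} for the fundamental function applied to $\tilde\Psi$, $\|\one_{(0,t)}\|_{\tilde\Psi}=1/(\tilde\Psi)^{-1}(1/t)$, and by the equivalence \eqref{equiv4tildeA} one has $1/(\tilde\Psi)^{-1}(1/t)\asymp t\,\Psi^{-1}(1/t)$ for $t>0$. Combining these, $N(t)\le c_\nu\, t\,\Psi^{-1}(1/t)=c_\nu\, t\,A^{-1}(1/t)$ for every $t\in(0,T)$, hence a fortiori on $(0,\tau_0)$; note that this part uses only $\nu\in L^1(0,T)$, not the monotonicity hypotheses \eqref{ipotesidecrescente}--\eqref{ipotesilimitato} of Theorem~\ref{casoLp}.

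For the second assertion, with this choice of $A$ the Young function $C$ of \eqref{eccoC} is $C^{-1}(x)=\int_0^x s^{-2+1/p}\Psi^{-1}(s)\,ds$ (the integrand being the product of the two non-increasing factors $s^{-1+1/p}$ and $\Psi^{-1}(s)/s$, which incidentally re-confirms that $C^{-1}$ is concave). The key point is that, since $\Psi^{-1}$ is concave with $\Psi^{-1}(0)=0$, the ratio $\Psi^{-1}(s)/s$ is non-increasing, while the strictness $L^\Psi\subsetneq L^1$ forbids $\Psi^{-1}(s)\asymp s$; therefore $\Psi^{-1}(s)/s\searrow 0$ as $s\to\infty$. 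Splitting $x^{-1/p}C^{-1}(x)=x^{-1/p}\bigl(\int_0^M+\int_M^x\bigr)s^{-2+1/p}\Psi^{-1}(s)\,ds$, the first term tends to $0$ because the integral is a fixed constant, while for the second, given $\varepsilon>0$ one picks $M$ with $\Psi^{-1}(s)\le\varepsilon s$ for $s\ge M$, so that $\int_M^x s^{-2+1/p}\Psi^{-1}(s)\,ds\le\varepsilon\int_M^x s^{-1+1/p}\,ds\le\varepsilon p\,x^{1/p}$. Hence $C^{-1}(x)/x^{1/p}\to 0$ as $x\to\infty$. Reading this through \eqref{inclusionOrlicz} with the Lebesgue function $t\mapsto t^p$ (so that $L^p=L^{t^p}$): $C^{-1}(s)\le s^{1/p}$ for $s$ large gives $L^C(0,T)\subset L^p(0,T)$, whereas $C^{-1}(s)/s^{1/p}\to 0$ rules out the reverse inclusion $L^p\subset L^C$; thus the containment is proper.

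I do not expect a serious obstacle here: the proposition is essentially a bookkeeping assembly of the quoted facts. The only two points requiring a little care are (i) pairing the functions correctly in H\"older's inequality — $\nu$ must be measured in $L^\Psi$ and $\one_{(0,t)}$ in the associate space $L^{\tilde\Psi}$, not the other way around — and (ii) upgrading ``$\liminf_{s\to\infty}\Psi^{-1}(s)/s=0$'', which is literally all that $L^\Psi\subsetneq L^1$ yields, to the genuine limit, which is exactly where the concavity of $\Psi^{-1}$ (automatic for a Young function) enters. The sole substantive external input is the Krasnosel'ski\u\i--Ruticki\u\i{} theorem, which is cited.
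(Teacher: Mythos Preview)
Your proof is correct. For the existence of a Young function $A$ satisfying \eqref{marcin} you proceed exactly as the paper does: pick $A=\Psi$ with $\nu\in L^\Psi\subsetneq L^1$ via Krasnosel'ski\u\i--Ruticki\u\i, then chain \eqref{holdOrlicz}, \eqref{fundfunct}, \eqref{equiv4tildeA}.

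For the strict inclusion $L^C(0,T)\subsetneq L^p(0,T)$ your route differs from the paper's. You stay with the integral definition \eqref{eccoC} and show $C^{-1}(x)/x^{1/p}\to 0$ by an $\varepsilon$--$M$ splitting of the integral, exploiting that $\Psi^{-1}(s)/s\searrow 0$ (concavity plus $L^\Psi\subsetneq L^1$). The paper instead passes through the multiplicative O'Neil/Sharpley relation $C^{-1}(x)=x^{1/p-1}A^{-1}(x)$ (cf.\ \eqref{eccoCshar}) and reads off algebraically that $C(s)/s^p=(A(t)/t)^p\nearrow\infty$. The paper's computation is a one-liner but tacitly relies on the equivalence between the two formulations \eqref{eccoC} and \eqref{eccoCshar} of $C$ (attributed to O'Neil); your argument is slightly longer but is self-contained with respect to the stated definition \eqref{eccoC}. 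The paper also obtains monotonicity of $C(s)/s^p$, though only the divergence is needed for the conclusion via \eqref{inclusionOrlicz}.
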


\begin{proof}
Let $A$ be a Young function such that $\nu\in L^A(0,T)$, $A(x)/x$ being increasing and divergent at infinity.
By \eqref{holdOrlicz}, \eqref{fundfunct}, \eqref{equiv4tildeA} respectively, 
$$
N(t)=\int_{0}^{t}\nu(\sigma)d\sigma=
\int_{0}^{T}\nu(\sigma)\chi_{(0,t)}d\sigma
\le 2 \|\nu\|_A\|\one_{(0,t)}\|_{\tilde A}
= c_\nu \cdot \frac{1}{(\tilde A)^{-1}(1/t)}
\approx c_\nu t A^{-1}\left(\frac1t\right)
$$
As to the second part of the statement, from \eqref{eccoC} we get that setting, for large $s$, $x=C(s)$ and $t=A^{-1}(x)$ (note that $x$ and $t$ are large as well)
$$
\frac{C(s)}{s^p}=\frac{x}{C^{-1}(x)^p}=\left(
\frac{x^{1/p}}{C^{-1}(x)}
\right)^p=
\left(
\frac{x}{A^{-1}(x)}
\right)^p=\left(
\frac{A(t)}{t}
\right)^p\nearrow \infty\, ,
$$
and using both implications in \eqref{inclusionOrlicz}, we get the assertion.
\end{proof}

\begin{example}
It is immediate to realize that the statement of Theorem \ref{casoLp} remains true if $\nu$ is replaced by any function equivalent to $\nu$ in a neighborhood of the origin. Hence all the previous remark still holds if $\nu$ is replaced by the function $\I(t)$ in 
\eqref{eq:i}.
\end{example}

The proof of Theorem \ref{casoLp} will follow as consequence of the following result appeared in Sharpley (\cite[Theorem 3.8]{Shar}), in the more abstract setting of general convolution operators, defined in \cite{oneilok}. In this latter paper our operator $J_\nu$, which goes back to \cite{hl}, is explicitly mentioned as example (see the end of the Section IV therein).
 Here we state it in a more convenient form, and using our notation:

\begin{teo}\label{bysharpley}
Suppose that $A$, $B$ are Young's functions such that 
\begin{equation}\label{hypot1}
xB'(x)\le c_B B(x) 
\qquad \textrm{for some}\,\, c_B>1\, , \,\, \textrm{for} \,\, x\,\, \textrm{large}
\end{equation}
and 
\begin{equation}\label{hypot2}
c_C C(x)\le xC'(x) 
\qquad \textrm{for some}\,\, c_C>1\, , \,\, \textrm{for} \,\, x\,\, \textrm{large}\, ,
\end{equation}
where $C$ is the Young's function defined by 
\begin{equation}\label{eccoCshar}
\frac{1}{xC^{-1}(x)}=\frac{1}{A^{-1}(x)}\cdot \frac{1}{B^{-1}(x)}\qquad \textrm{for} \,\, x\,\, \textrm{large}\, .
\end{equation}
Then,
\begin{equation}\label{thesis4Lpshar}
\| J_\nu g\|_C\le c_{A,B}\sup_x \left\{ \frac{\nu^{**}(x)}{A^{-1}\left(\frac1x\right)}\right\}
 \|g\|_B\, ,
\end{equation}
where ($J_\nu$ is defined by \eqref{J_nu} and) $\nu^{**}$ denotes the averaged rearrangement of $\nu$, defined by 
$$
\nu^{**}(t)=\frac1t\int_0^t\nu^*(s)ds\, .
$$
\end{teo}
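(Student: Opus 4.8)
The plan is to deduce Theorem \ref{bysharpley} from \cite[Theorem 3.8]{Shar}, which treats convolution operators in the abstract sense of O'Neil \cite{oneilok}. The first step is to place $J_\nu$ inside that framework: O'Neil builds his operators from a fixed kernel through a bilinear convolution, and — as he points out explicitly at the end of Section IV of \cite{oneilok} — the operator $J_\nu g(x)=\int_0^x\nu(x-s)g(s)\,ds$ (which goes back to \cite{hl}) is one of his examples, with kernel $\nu$ truncated to $(0,T)$. One then checks that restricting to a finite interval $0<T<\infty$ costs nothing: since all hypotheses on $A$, $B$, $C$ are imposed only ``for $x$ large'', and since on a finite-measure space an Orlicz space depends only on the large-argument behaviour of its defining Young's function (the equivalence recalled just before the statement of Theorem \ref{casoLp}), one may argue as though the kernel lived on the half-line $(0,\infty)$.

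The second step is to recall the mechanism behind such estimates. O'Neil's pointwise rearrangement inequality for the convolution bounds $(J_\nu g)^{**}(t)$ by a Calder\'on-type expression of the form $t\,\nu^{**}(t)\,g^{**}(t)+\int_t^{\infty}\nu^*(s)\,g^*(s)\,ds$. The hypothesis of the theorem says exactly that $\nu^{**}(t)\le M\,A^{-1}(1/t)$ with $M=\sup_x\{\nu^{**}(x)/A^{-1}(1/x)\}$ (and hence also $\nu^*\le\nu^{**}\le M\,A^{-1}(1/\cdot)$), so the estimate reduces to controlling, by $M\|g\|_B$, the $L^C$-norm of a forward Hardy operator and of a conjugate Hardy operator with weight $A^{-1}(1/\cdot)$. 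The definition \eqref{eccoCshar}, $1/(xC^{-1}(x))=A^{-1}(x)^{-1}B^{-1}(x)^{-1}$ for $x$ large, is precisely the product rule $t\,\varphi_C(t)=\varphi_A(t)\,\varphi_B(t)$ for the fundamental functions $\varphi_A(t)=1/A^{-1}(1/t)$, etc. (compare \eqref{fundfunct}), that is, the Orlicz analogue of $1/r=1/p+1/q-1$; it makes $L^B\to L^C$ the natural target for a kernel of ``size'' $A^{-1}(1/\cdot)$.

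The third step is to invoke the boundedness of these Hardy-type operators between $L^B$ and $L^C$, which is the content of Sharpley's Theorem 3.8 and is where \eqref{hypot1} and \eqref{hypot2} are used: \eqref{hypot1} is a $\Delta_2$-type regularity/growth restriction on the domain function $B$, while \eqref{hypot2}, $c_C C(x)\le xC'(x)$ with $c_C>1$, forces $C$ to grow at least like a power $x^{c_C}$ with $c_C>1$ (the $\nabla_2$ condition), which is exactly what makes the conjugate Hardy operator bounded with target $L^C$. Assembling the three steps yields \eqref{thesis4Lpshar} with a constant $c_{A,B}$ depending only on the structural constants $c_B$, $c_C$ in \eqref{hypot1}--\eqref{hypot2}.

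The main obstacle is not any single estimate but the faithful translation of conventions. Both \cite{Shar} and \cite{oneilok} work over general (non-atomic, possibly infinite-measure) spaces and phrase their hypotheses in terms of indices or global growth conditions, whereas we need the statement for the concrete operator $J_\nu$ on $(0,T)$ with the kernel-dependent factor written explicitly as $\sup_x\{\nu^{**}(x)/A^{-1}(1/x)\}$. The delicate points are: (i) the passage from $(0,T)$ to the half-line using only large-argument behaviour of $A$, $B$, $C$; (ii) verifying that the kernel constant appearing in Sharpley's conclusion is exactly this supremum and not merely an equivalent surrogate; and (iii) checking that his index hypotheses coincide with \eqref{hypot1}--\eqref{hypot2}. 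Each is routine, but all three must be carried out to make the reduction watertight.
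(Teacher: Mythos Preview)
Your proposal is correct and matches the paper's approach: the paper does not prove this theorem at all but simply cites it as a restatement of \cite[Theorem~3.8]{Shar} (noting that $J_\nu$ fits O'Neil's framework via the end of Section~IV of \cite{oneilok}), which is exactly the reduction you carry out. Your three-step outline in fact supplies more detail than the paper does on the translation of conventions.
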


\it Proof of Theorem \ref{casoLp}. \rm 

Setting $B(t)=t^p$, \eqref{hypot1} is obviously satisfied with equality and $c_B=1$; from \eqref{eccoCshar} we get that if $C$ is defined by
$$ 
\frac{1}{xC^{-1}(x)}=\frac{1}{A^{-1}(x)}\cdot \frac{1}{x^{1/p}}\qquad \textrm{for} \,\, x\,\, \textrm{large}\, ,
$$
i.e. if \eqref{eccoC} holds, then \eqref{hypot2} is satisfied: in fact, from the convexity of the Young function $A$, the function $A^{-1}(x)/x$ is decreasing, hence from \eqref{eccoC} we deduce that also the following ones are decreasing:
$$
\frac{C^{-1}(x)}{x^{1/p}}\,\, , \,\, \frac{x}{C(x)^{1/p}}\,\, , \,\,  \frac{x^p}{C(x)}\,\, , \,\, 
$$
and therefore $C(x)/x^p$ is increasing, from which \eqref{hypot2} is satisfied with $c_C=p$. We are therefore allowed to apply
Theorem \ref{bysharpley}.
 
From \eqref{ipotesidecrescente} and \eqref{ipotesilimitato} it follows that for small values of $t$ it is $\nu(t)=\nu^*(t)$, hence, by \eqref{marcin}, for $x$ small it is
$$
 \frac{\nu^{**}(x)}{A^{-1}\left(\frac1x\right)}=\frac{N(x)}{xA^{-1}\left(\frac1x\right)}\le c_{\nu}\, ,
$$
and of course the same conclusion holds (for a possibly different $c_\nu$) for all $x\in (0,T)$. Hence the supremum in the right hand side of \eqref{thesis4Lpshar} is a finite constant $c\left(\nu(\cdot), A(\cdot), p,T\right)$, from which \eqref{thesis4Lp} follows.

\medskip
Finally, the case $p=\infty$ is considered in the next result, where the regularizing effect of $J_{\nu}$ is expressed through continuity.

\begin{pro}
 \label{pro:infinity}
 If $\nu>0$ and $\nu\in L^1(0,T)$, then for any $g\in L^\infty(0,T)$ there results
 \[
  J_\nu g\in C^0([0,T])
 \]
 and
 \begin{equation}
 \label{eq:est_I_inf}
 \|J_\nu g\|_{L^\infty(0,T)} \leq N(T) \|g\|_{L^\infty(0,T)},
\end{equation}
where $N$ is the integral function of $\nu$ defined by \eqref{eq:int_funct_nu}.
\end{pro}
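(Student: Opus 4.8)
The plan is to handle the two assertions separately: the norm bound \eqref{eq:est_I_inf} is immediate, while the continuity rests on a single change of variable combined with the continuity of translations in $L^1$.

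First I would prove \eqref{eq:est_I_inf}. For every $x\in[0,T]$, estimating the integrand pointwise by $\nu(x-s)\,\|g\|_{L^\infty(0,T)}$ and then setting $u=x-s$ gives
\[
|J_\nu g(x)|\le \|g\|_{L^\infty(0,T)}\int_0^x\nu(x-s)\,ds=\|g\|_{L^\infty(0,T)}\int_0^x\nu(u)\,du=N(x)\,\|g\|_{L^\infty(0,T)},
\]
which, since $N$ is increasing, is at most $N(T)\,\|g\|_{L^\infty(0,T)}$; taking the supremum over $x$ yields \eqref{eq:est_I_inf}. In particular $J_\nu g(0)=0$ and $|J_\nu g(x)|\le N(x)\|g\|_{L^\infty(0,T)}\to 0$ as $x\to 0$.

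For the continuity on $[0,T]$, fix $x,y\in[0,T]$ with $x<y$ and set $h:=y-x$. Breaking the integral defining $J_\nu g(y)$ at $x$ and rearranging, one obtains
\[
J_\nu g(y)-J_\nu g(x)=\int_0^x\bigl[\nu(y-s)-\nu(x-s)\bigr]g(s)\,ds+\int_x^y\nu(y-s)g(s)\,ds.
\]
The second term is bounded in modulus by $\|g\|_{L^\infty(0,T)}\int_x^y\nu(y-s)\,ds=\|g\|_{L^\infty(0,T)}N(y-x)$, which tends to $0$ as $h\to 0$ because $N$ is absolutely continuous with $N(0)=0$. In the first term the substitution $u=x-s$ gives $\int_0^x[\nu(u+h)-\nu(u)]g(x-u)\,du$, whose modulus is at most $\|g\|_{L^\infty(0,T)}\int_0^x|\nu(u+h)-\nu(u)|\,du$. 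Denoting by $\bar\nu$ the extension of $\nu$ by $0$ to all of $\R$ (so that $\bar\nu\in L^1(\R)$) and observing that for $u\in(0,x)$ both $u$ and $u+h$ lie in $(0,T)$ — the latter since $u+h<y\le T$ — this last integral is dominated by $\bigl\|\bar\nu(\cdot+h)-\bar\nu\bigr\|_{L^1(\R)}$.

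Combining the two estimates, for all $x,y\in[0,T]$,
\[
|J_\nu g(y)-J_\nu g(x)|\le\|g\|_{L^\infty(0,T)}\Bigl(\bigl\|\bar\nu(\cdot+|y-x|)-\bar\nu\bigr\|_{L^1(\R)}+N(|y-x|)\Bigr),
\]
and the right-hand side is a modulus of continuity that vanishes as $|y-x|\to 0$, by the continuity of $N$ at $0$ together with the continuity of the translation operator on $L^1(\R)$. Hence $J_\nu g$ is (uniformly) continuous on $[0,T]$, that is $J_\nu g\in C^0([0,T])$. The only genuine difficulty is the first term: since $g$ is merely bounded and need not be continuous, one cannot invoke continuity of $g$; the change of variable $u=x-s$ transfers the required regularity onto $\nu$, after which the $L^1$-continuity of translations closes the argument.
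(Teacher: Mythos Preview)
Your proof is correct and follows essentially the same route as the paper's: both split $J_\nu g(y)-J_\nu g(x)$ into a ``tail'' piece $\int_x^y\nu(y-s)g(s)\,ds$ controlled by $N(|y-x|)\|g\|_\infty$ and a ``difference'' piece $\int_0^x[\nu(y-s)-\nu(x-s)]g(s)\,ds$ controlled, after extending $\nu$ by zero, by the $L^1$-continuity of translations (what the paper calls the \emph{mean continuity property}). The only cosmetic difference is that you perform the change of variable $u=x-s$ before invoking translation continuity, which has the pleasant side effect of producing a bound depending only on $|y-x|$ and hence yielding uniform continuity directly.
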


\begin{proof}
 Recalling \eqref{eq:op_I} and \eqref{eq:Nasympt}, \eqref{eq:est_I_inf} is immediate. Then, it is left to prove that $J_\nu g$ is continuous. To this aim, fix $x_0\in[0,T[$ and $x\in\;]x_0,T]$. Easy computations yield
 \begin{align*}
  J_\nu g(x)-J_\nu g(x_0) = & \, \int_0^T\nu(x-s)\one_{[x_0,x]}(s)g(s)ds+\\[.2cm]
                            & \, -\int_0^T\big(\nu(x_0-s)-\nu(x-s)\big)\one_{[0,x_0]}(s)g(s)ds
 \end{align*}
 and hence
 \begin{align*}
 \left |J_\nu g(x)-J_\nu g(x_0)\right|\leq & \,\int_0^T\nu(x-s)\one_{[x_0,x]}(s)|g(s)|ds+ \\
                      & \,+\int_0^T|\nu(x_0-s)-\nu(x-s)|\one_{[0,x_0]}(s)|g(s)|ds \\
                      \leq & \,N(x-x_{0})\|g\|_{L^\infty(0,T)}+\|g\|_{L^\infty(0,T)} \int_\mathbb{R}|\nu_e(x_0-s)-\nu_e(x-s)|ds, 
 \end{align*}
 where
 \[
  \nu_e(s)=\left\{
  \begin{array}{ll}
   \nu(s), & \text{if }s\in\;]0,T[\,,\\[.2cm]
   0,      & \text{if }s\in\R\backslash\;]0,T[\,.
  \end{array}
  \right.
 \]
 Therefore, the first term converges  to zero by the continuity of $N$ and the second term converges to zero by the \emph{mean continuity property} (see \cite{okiko}). Since the same holds if $x<x_0$, one has $J_\nu g(x)\to J_\nu g(x_0)$, which concludes the proof.

\end{proof}


\section{Contraction in Sobolev spaces}
\label{sob!}

\noindent
We start with some basics on Sobolev spaces with \emph{fractional} index. Let $-\infty \leq a < b \leq +\infty$ and $\theta \in (0,1)$. We denote by $H^\theta(a,b)$ the Sobolev space defined by
\[
 H^\theta(a,b) := \{g\in L^2(a,b):[g]_{\dot{H}^\theta(a,b)}^2<\infty\},
\]
where
\[
 [g]_{\dot{H}^\theta(a,b)}^2 := \int_{[a,b]^2}\frac{|g(x)-g(y)|^2}{|x-y|^{1+2\theta}}dydx.
\]
This is a Hilbert space with the natural norm
\[
 \|g\|_{H^\theta(a,b)}^2 := \|g\|_{L^2(a,b)}^2 + [g]_{\dot{H}^\theta(a,b)}^2.
\]
When $a = -\infty$ and $b = +\infty$, $H^\theta(\R)$ can be equivalently defined using the Fourier transform (see \cite{DiN}); that is, if we define the Fourier transform as
\[
 \widehat{g}(k):=\frac{1}{\sqrt{2\pi}}\int_\R e^{-ikx}g(x)dx,
\]
then there exist two constants $c_{1,\theta},\,c_{2,\theta} > 0$ such that 
\begin{equation}
\label{eq:normeq}
 c_{1,\theta}\|(1+k^2)^{\theta/2}\,\widehat{g}\,\|_{L^2(\R)} \leq \|g\|_{H^\theta(\R)} \leq c_{2,\theta}\|(1+k^2)^{\theta/2}\,\widehat{g}\,\|_{L^2(\R)}.
\end{equation}

\begin{rem}
 We recall that with $\theta=1$ we mean the Sobolev space $H^1$, with the usual definition, whereas, with a little abuse, $\theta=0$ is an equivalent notation for $L^2$.
\end{rem}

Now, before stating the main theorem of this section, we recall a result on \emph{truncation/extension} of functions in $H^\theta(0,T)$.

\begin{lem}
\label{lem-ext}
 Let $g\in H^\theta(0,T)$, $0\leq\theta\leq1$, and set
 \begin{equation}
  \label{eq:ext}
  g_e(x) = \left\{
  \begin{array}{ll}
   g(x),    & \text{if } x\in[0,T],\\[.3cm]
   g(2T-x), & \text{if } x\in\;]T,2T],\\[.3cm]
   0,       & \text{if } x\in\R\backslash[0,2T].
  \end{array}
  \right.
 \end{equation}
The following holds:
 \begin{itemize}
  \item[(i)] if $\theta\in[0,1/2[\,$, then $g_e\in H^\theta(\R)$;\\[-.3cm]
  \item[(ii)] if $\theta\in\;]1/2,1]$ and $g(0)=0$, then $g_e\in H^\theta(\R)$.
 \end{itemize}
Moreover, in both cases, there exists a constant $c_\theta>0$ (independent of $g$ and $T$) such that
 \begin{equation}
 \label{eq:stimaf}
  \|g_e\|_{H^\theta(\R)} \leq c_\theta \|g\|_{H^\theta(0,T)}.
 \end{equation}
\end{lem}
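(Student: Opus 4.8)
The plan is to realize $g_e$ in two elementary moves and to control the $H^\theta$-norm after each: first an even reflection across $x=T$ (harmless for \emph{every} $\theta\in[0,1]$), and then the extension by zero to $\R$ (where the two ranges of $\theta$, and in the second the condition $g(0)=0$, enter). The endpoints $\theta=0,1$ are quick: for $\theta=0$ one has $\|g_e\|_{L^2(\R)}^2=2\|g\|_{L^2(0,T)}^2$; for $\theta=1$, since $H^1(0,T)\hookrightarrow C([0,T])$, the reflected-then-zero-extended function has no jump at $T$ (both lateral traces equal $g(T)$) and no jump at $0$ or at $2T$ precisely because $g(0)=0$, so $g_e\in H^1(\R)$ with $g_e'$ equal to $g'$ on $(0,T)$, to $-g'(2T-\cdot)$ on $(T,2T)$, and to $0$ elsewhere, whence $\|g_e\|_{H^1(\R)}^2=2\|g\|_{H^1(0,T)}^2$. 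From now on I may assume $\theta\in(0,1)$ and work with the Gagliardo seminorm.

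For the reflection step, write $\tilde g$ for the even reflection of $g$ about $T$ on $(0,2T)$, so that $g_e$ is $\tilde g$ extended by $0$. I would split $(0,2T)^2$ into the four squares $(0,T)^2$, $(T,2T)^2$, $(0,T)\times(T,2T)$ and its transpose. On $(0,T)^2$ the seminorm of $\tilde g$ is exactly $[g]_{\dot H^\theta(0,T)}^2$; on $(T,2T)^2$ the change of variables $x\mapsto 2T-x$, $y\mapsto 2T-y$ leaves $|x-y|$ invariant and gives again $[g]_{\dot H^\theta(0,T)}^2$. On the mixed square I substitute $v=2T-y\in(0,T)$ and use the elementary inequality, valid for $x,v\in(0,T)$,
\[
 |x-(2T-v)|=(T-x)+(T-v)\ \ge\ \bigl|(T-x)-(T-v)\bigr|=|x-v|,
\]
so that this piece is again dominated by $[g]_{\dot H^\theta(0,T)}^2$; the transpose square is identical. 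Together with $\|\tilde g\|_{L^2(0,2T)}^2=2\|g\|_{L^2(0,T)}^2$ this yields $\|\tilde g\|_{H^\theta(0,2T)}\le c_\theta\|g\|_{H^\theta(0,T)}$.

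For the zero-extension step I would compute the ``missing'' part of the norm of $g_e$ on $\R$. Since $g_e$ vanishes off $(0,2T)$, the explicit inner integrals give
\[
 \|g_e\|_{H^\theta(\R)}^2-\|\tilde g\|_{H^\theta(0,2T)}^2
 =2\int_0^{2T}|\tilde g(x)|^2\!\int_{\R\setminus(0,2T)}\!\frac{dy}{|x-y|^{1+2\theta}}\,dx
 =\frac1\theta\int_0^{2T}|\tilde g(x)|^2\Bigl(\frac1{x^{2\theta}}+\frac1{(2T-x)^{2\theta}}\Bigr)dx.
\]
The contribution of $x$ bounded away from $\{0,2T\}$ is $\lesssim T^{-2\theta}\|g\|_{L^2(0,T)}^2$, while the contribution near the two endpoints reduces, using that $\tilde g$ is the even reflection of $g$, to $\int_0^{T}|g(y)|^2\,y^{-2\theta}dy$. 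Hence it remains only to bound $\int_0^T|g(y)|^2 y^{-2\theta}dy$ by $c_\theta\|g\|_{H^\theta(0,T)}^2$, which is the one-dimensional fractional Hardy inequality on an interval: it holds with no boundary restriction when $2\theta<1$, and holds, thanks to $g(0)=0$, when $2\theta>1$ --- which is exactly why $\theta=1/2$ must be excluded. Combining this with the reflection step gives \eqref{eq:stimaf}.

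I expect the last estimate to be the only genuine point of the proof: the inequality $\int_0^T|g(y)|^2y^{-2\theta}dy\le c_\theta\|g\|_{H^\theta(0,T)}^2$ in the range $\theta\in(1/2,1)$, where the vanishing $g(0)=0$ has to be used quantitatively (it is not a consequence of a crude H\"older bound). I would invoke here the classical Hardy-type inequality in fractional Sobolev spaces (equivalently, the identification of $g$ vanishing at the endpoint with an element of $H^\theta_{00}$), and the scale invariance of that inequality is what controls the $T$-dependence of the constant. Everything else --- the reflection estimate and the evaluation of the one-dimensional singular integrals over $\R\setminus(0,2T)$ --- is elementary and is carried out by explicit changes of variables.
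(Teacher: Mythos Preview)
Your two–step plan (even reflection about $x=T$, then extension by zero to $\R$) is exactly the scaffold of the paper's proof, and your treatment of the reflection is the same computation the paper carries out: split $(0,2T)^2$ into four blocks, change variables, and use $|x+y-2T|\ge |x-y|$ on the cross terms to get $\|\tilde g\|_{H^\theta(0,2T)}\le 2\|g\|_{H^\theta(0,T)}$.

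Where you differ is in the zero–extension step. The paper simply invokes Lemma~2.1 of \cite{CFNT} as a black box to pass from $H^\theta(0,2T)$ to $H^\theta(\R)$ (with the endpoint condition $g_e(0)=g_e(2T)=0$ when $\theta>1/2$). You instead compute the extra Gagliardo mass explicitly and reduce it to the one–sided fractional Hardy inequality $\int_0^T |g(y)|^2 y^{-2\theta}\,dy\le c_\theta\|g\|_{H^\theta(0,T)}^2$. This is a more transparent route: it exposes the analytic content behind the cited lemma and explains cleanly why $\theta=1/2$ is excluded.

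One genuine caution on the $T$–independence of the constant. Your appeal to ``scale invariance'' is sound when $\theta\in(1/2,1)$: with $g(0)=0$ the Hardy estimate can be stated against the homogeneous seminorm $[g]_{\dot H^\theta(0,T)}$, and both sides rescale identically. But for $\theta\in(0,1/2)$ with no boundary condition, the claimed inequality with a $T$–independent constant is false: for $g\equiv 1$ the left side equals $T^{1-2\theta}/(1-2\theta)$ while $\|g\|_{H^\theta(0,T)}^2=T$, and the ratio blows up as $T\to 0$. The same example shows directly that $\|g_e\|_{H^\theta(\R)}/\|g\|_{H^\theta(0,T)}\to\infty$ as $T\to0$, so this is an issue with the lemma's statement itself rather than with your strategy; the paper does not confront it either, delegating the step to \cite{CFNT}. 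If $T$ is bounded below (as is implicit in the applications), or if $c_\theta$ is allowed to depend on $T$, your argument is complete.
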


\begin{proof}
 The proof is a straightforward application of Lemma 2.1 in \cite{CFNT}. Cases $\theta=0,1$ are trivial. Consider, then, an arbitrary $\theta\in(0,1)$. First, we can easily check that $\|g_e\|_{L^2(0,2T)}^2 = 2\|g\|_{L^2(0,T)}^2$ and that, with some change of variables,
 \[
  [g_e]_{\dot{H}^\theta(0,2T)}^2 = 2[g]_{\dot{H}^\theta(0,T)}^2 + 2\int_{[0,T]^2}\frac{|g(x)-g(y)|^2}{|x+y-2T|^{1+2\theta}}dydx.
 \]
 Moreover, since $|x+y-2T| \geq |x-y|$ for every $(y,x) \in [0,T]^2$,
 \[
  \int_{[0,T]^2}\frac{|g(x)-g(y)|^2}{|x+y-2T|^{1+2\theta}}dydx \leq [g]_{\dot{H}^\theta(0,T)}^2.
 \]
 Hence $[g_e]_{\dot{H}^\theta(0,2T)}^2 \leq 4[g]_{\dot{H}^\theta(0,T)}^2$, so that
 \begin{equation}
 \label{eq:extradd}
  \|g_e\|_{H^\theta(0,2T)} \leq 2\|g\|_{H^\theta(0,T)}.
 \end{equation}
 Now, from Lemma 2.1 of \cite{CFNT}, we know that if $\theta\in\;]0,1/2[\,$, then there exists $c_\theta > 0$ such that
 \begin{equation}
 \label{eq:exterre}
  \|g_e\|_{H^\theta(\R)} \leq c_\theta \|g_e\|_{H^\theta(0,2T)}.
 \end{equation}
 On the other hand, the same lemma shows that \eqref{eq:exterre} holds even if $\theta\in\;]1/2,1[\,$, provided that $g(0)=0$ (since this entails by definition $g_e(0)=g_e(2T)=0$). Combining \eqref{eq:exterre} and \eqref{eq:extradd}, the proof is complete.
\end{proof}

\begin{rem}
 Note that, when $\theta>1/2$, the assumption $g(0)=0$ is meaningful as $g$ is continuous by Sobolev embeddings (\cite{DD,DiN}). What is more, this requirement is mandatory, since otherwise $g_e$ might not preserve continuity on $\R$.
\end{rem}

\begin{rem}
 We also stress that the case $\theta=1/2$ is not managed by Lemma \ref{lem-ext} since Lemma 2.1 in \cite{CFNT} is not valid in general for this choice of $\theta$, due to the failure of Hardy inequality (see e.g. \cite{kp}).
\end{rem}

Then, we can state the main result of this section. We recall that, as in the previous sections, $J_\nu$ denotes the operator defined by \eqref{J_nu} and $N$ the integral function of the kernel defined by \eqref{eq:int_funct_nu}.

\begin{teo}
\label{contr_lemma}
 If $\theta\in[0,1]$ and $\nu>0$, $\nu\in L^{1}(0,T)$, then there exists a constant $c_\theta>0$ such that
\begin{equation}
\label{eq:est_I}
\|J_\nu g\|_{H^{\theta}(0,T)} \leq c_\theta N(T) \|g\|_{H^{\theta}(0,T)},\quad\forall g\in H^\theta(0,T),\quad\theta<1/2.
\end{equation}
Moreover, \eqref{eq:est_I} is valid also when $\theta>1/2$, provided that $g$ satisfies $g(0)=0$.
\end{teo}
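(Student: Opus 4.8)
The plan is to reduce the estimate on $[0,T]$ to a convolution estimate on the whole line, where the Fourier transform makes the gain transparent. First I would extend $g$ by Lemma \ref{lem-ext} to $g_e\in H^\theta(\R)$ — this is exactly the point at which the hypothesis $g(0)=0$ enters, and only when $\theta>1/2$, the value $\theta=1/2$ being excluded precisely because Lemma \ref{lem-ext} fails there — so that $\|g_e\|_{H^\theta(\R)}\le c_\theta\|g\|_{H^\theta(0,T)}$ by \eqref{eq:stimaf}. Simultaneously I would extend the kernel by zero, setting $\nu_e(s)=\nu(s)$ for $s\in\,]0,T[\,$ and $\nu_e(s)=0$ otherwise, so that $\nu_e\in L^1(\R)$ with $\|\nu_e\|_{L^1(\R)}=N(T)$, $N$ being defined by \eqref{eq:int_funct_nu}.

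The key observation is that for $x\in[0,T]$ one has $(J_\nu g)(x)=(\nu_e*g_e)(x)$, the convolution being taken on $\R$: indeed $\nu_e(x-s)\ne0$ forces $s\in\,]x-T,x[\,$, and on this range $g_e$ coincides with $g$ on $[0,x]$, while $\nu_e(x-s)=0$ for $s\le0$ (vanishing part of $g_e$) and, since $x\le T$, also for $s\ge T$ (the reflected part of $g_e$ on $]T,2T]$ never contributes); hence the integral reduces to $\int_0^x\nu(x-s)g(s)ds$. Since restriction to a subinterval increases neither the $L^2$ norm nor the Gagliardo seminorm (for $\theta=1$, neither the $L^2$ norm of the function nor that of its derivative), it follows that $\|J_\nu g\|_{H^\theta(0,T)}\le\|\nu_e*g_e\|_{H^\theta(\R)}$, and the problem is reduced to bounding the latter.

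For the convolution on $\R$ I would invoke the Fourier characterization \eqref{eq:normeq} (which in the endpoint cases $\theta=0,1$ is just a Plancherel identity). Since $\widehat{\nu_e*g_e}=\sqrt{2\pi}\,\widehat{\nu_e}\,\widehat{g_e}$ and $\|\widehat{\nu_e}\|_{L^\infty(\R)}\le\frac{1}{\sqrt{2\pi}}\|\nu_e\|_{L^1(\R)}=\frac{1}{\sqrt{2\pi}}N(T)$, one obtains
\[
\|\nu_e*g_e\|_{H^\theta(\R)}\le c_{2,\theta}\big\|(1+k^2)^{\theta/2}\widehat{\nu_e*g_e}\,\big\|_{L^2(\R)}\le c_{2,\theta}N(T)\big\|(1+k^2)^{\theta/2}\widehat{g_e}\,\big\|_{L^2(\R)}\le\frac{c_{2,\theta}}{c_{1,\theta}}N(T)\,\|g_e\|_{H^\theta(\R)},
\]
and combining with $\|g_e\|_{H^\theta(\R)}\le c_\theta\|g\|_{H^\theta(0,T)}$ gives \eqref{eq:est_I} with constant $c_{2,\theta}c_\theta/c_{1,\theta}$. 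Alternatively one can bypass the Fourier side: Young's inequality gives $\|\nu_e*g_e\|_{L^2(\R)}\le N(T)\|g_e\|_{L^2(\R)}$, and applying Cauchy--Schwarz to $\int\nu_e(z)\big(g_e(x-z)-g_e(y-z)\big)dz$ followed by Tonelli yields $[\nu_e*g_e]_{\dot H^\theta(\R)}\le N(T)[g_e]_{\dot H^\theta(\R)}$ for $\theta\in(0,1)$, the case $\theta=1$ following from $(\nu_e*g_e)'=\nu_e*(g_e)'$ and Young once more.

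I do not expect a serious obstacle: the regularity-preserving content is carried entirely by the extension Lemma \ref{lem-ext}, and once $g$ is extended the estimate is essentially one line. The only points that require care are the verification of the convolution identity $(J_\nu g)|_{[0,T]}=(\nu_e*g_e)|_{[0,T]}$ — one must check that neither the reflected part of $g_e$ on $]T,2T]$ nor the vanishing part on $]{-\infty},0[$ contributes, which is where $x\le T$ is used — and the uniform handling of the endpoints $\theta=0$ and $\theta=1$, for which \eqref{eq:normeq} should be read as a Plancherel identity rather than merely quoted as a norm equivalence.
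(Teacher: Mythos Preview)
Your argument is correct and follows essentially the same route as the paper: extend $g$ to $g_e$ via Lemma~\ref{lem-ext}, extend $\nu$ by zero to $\nu_e$, identify $J_\nu g$ on $[0,T]$ with the restriction of the full-line convolution $\nu_e*g_e$, and then conclude via the Fourier characterization together with $|\widehat{\nu_e}|\le cN(T)$. The only difference is cosmetic: the paper reaches the identity $\widehat f=c\,\widehat{\nu_e}\,\widehat{g_e}$ through a Heaviside-function decomposition of the indicator $\one_{[0,x]}$, whereas you verify the convolution identity directly on the real side (which is slightly cleaner); your optional real-variable alternative via Young and the Gagliardo seminorm is a valid shortcut not present in the paper.
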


\begin{proof}
 Fix $\theta\in[0,1]\backslash\{1/2\}$. Then, let again
 \[
  \nu_e(x):=\left\{
  \begin{array}{ll}
   \nu(x), & \text{if }x\in\;]0,T[\,,\\[.3cm]
   0,      & \text{if }x\in\R\backslash\;]0,T[
  \end{array}
  \right.
 \]
 and, for any $g\in H^\theta(0,T)$, define
 \[
  f(x) := \int_0^x\nu_e(x-s)g_e(s)ds,
 \]
 where $g_e$ is the extension of $g$ obtained via Lemma \ref{lem-ext}. Note that \eqref{eq:stimaf} applies if either $\theta<1/2$ or $\theta>1/2$ with, in this second case, the further assumption that $g(0)=0$. As $f(x)=(J_\nu g)(x)$ for all $x\in[0,T]$,
 \begin{equation}
 \label{eq:da_T_a_R}
 \|J_{\nu}g\|_{H^\theta(0,T)} = \|f\|_{H^\theta(0,T)} \leq \|f\|_{H^\theta(\R)}.
 \end{equation}
 Now, by definition
 \[
  \widehat{f}(k) = \frac{1}{\sqrt{2\pi}}\int_\R e^{-ikx} \int_\R \nu_e(x-s)g_e(s)\one_{[0,x]}(s)ds\,dx.
 \]
 Since $\one_{[0,x]}(s) = H(s) - H(s-x)$, where $H$ denotes the Heaviside function,
 \[
  \widehat{f} = \widehat{\nu_e*(g_eH)} - \widehat{(\nu_eH_r)*g_e},
 \]
 with $H_r(s) = H(-s)$. Consequently, by well known properties of the Fourier transform,
 \[
  \widehat{f} = c\left(\widehat{\nu_e}\widehat{g_eH} - \widehat{\nu_eH_r}\widehat{g_e}\right).
 \]
 Thus, noting that $g_e(x)H(x)=g_e(x)$ and $\nu_e(x)H_r(x)=0$,
 \begin{equation}
 \label{eq:gtras}
  \widehat{f} = c\,\widehat{\nu_e}\widehat{g_e}.
 \end{equation}
 Then, combining \eqref{eq:da_T_a_R}, \eqref{eq:normeq} and \eqref{eq:gtras},
 \begin{equation}
 \label{eq:g_norm}
 \|J_{\nu}g\|_{H^\theta(0,T)}^2 \leq c\int_\R(1+k^2)^\theta|\widehat{\nu_e}(k)|^2\,|\widehat{g_e}(k)|^2dk.
 \end{equation}
 Moreover, we observe that $|\widehat{\nu_e}(k)| \leq c N(T)$ and, plugging into \eqref{eq:g_norm}, that
 \[
  \|J_{\nu}g\|_{H^\theta(0,T)}^2 \leq c\cdot N^2(T)\|g_e\|_{H^\theta(\R)}^2.
 \]
 Combining with \eqref{eq:stimaf}, \eqref{eq:est_I} follows.
 \end{proof}
 
 \begin{rem}
  We observe that the ``contractive'' effect of $J_\nu$, pointed out in the Introduction, is in the fact that $N(T)\to0$, as $T\to0$. It entails that on small intervals the operator ``shrinks'' the norm of the argument function by a factor that gets smaller whenever $T$ gets smaller.
 \end{rem}

 \begin{rem}
 Note that in the previous theorem, when $\theta>1/2$ the assumption $g(0)=0$ cannot be removed, since otherwise the result is false. If one dropped this requirement, indeed, then the statement would imply that $N(x)=(J_\nu 1)(x)$ belongs to $H^\theta(0,T)$, which cannot hold in general. A remarkable counterexample is given by the case $\nu=\I$, where one can prove that $N=\NN$ does not belong to $H^\theta(0,T)$ for any $\theta>1/2$ (see Lemma \ref{lem:N}).
\end{rem}
 
The case $\theta=1/2$ is far more awkward since no ``extension-to-zero'' result, such as Lemma \ref{lem-ext}, is available. However, in the case $\nu=\I$, we can state an analogous for Theorem \ref{contr_lemma}. In order to prove it, it is though required a further investigation of the behavior of the integral function $\I$.
 
\begin{lem}
 \label{lem:N}
 The function $\NN$ defined by \eqref{enne} does not belong to $H^\theta(0,T)$ for any $\theta\in\;]1/2,1]$. On the other contrary, it belongs to $ H^\theta(0,T)$ for every $\theta\in[0,1/2]$.
\end{lem}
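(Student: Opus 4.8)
The plan is to prove the two assertions separately, localizing in both cases near the origin, since that is where the only difficulty lies: on every interval $[\delta,T]$ with $\delta>0$ the kernel $\I$ is continuous, hence bounded, so $\NN$ is Lipschitz on $[\delta,T]$ (because $\NN'=\I$) and thus trivially belongs to $H^\theta(\delta,T)$ for all $\theta\in(0,1)$.

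For the first (negative) part, I would invoke the fractional Sobolev embedding $H^\theta(0,T)\hookrightarrow C^{0,\theta-1/2}([0,T])$, which holds for $1/2<\theta<1$ (see \cite{DiN}) and, classically, for $\theta=1$. It then suffices to show that $\NN$ fails to be H\"older continuous of any positive order at $0$. Indeed $\NN(0)=0$ and, by \eqref{eq:Nasympt}, $\NN(x)\geq\tfrac12/\log(1/x)$ for $x$ small; if one had $\NN\in C^{0,\gamma}$ with $\gamma>0$, then $\NN(x)=\NN(x)-\NN(0)\leq[\NN]_\gamma\,x^\gamma$ would force $[\NN]_\gamma\geq\tfrac12\,x^{-\gamma}/\log(1/x)\to+\infty$ as $x\to0^+$, a contradiction. (For $\theta=1$ one may also argue directly: $\NN\in H^1(0,T)$ would give $\I=\NN'\in L^2(0,T)$, which is false by \eqref{eq:Iasympt}.)

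For the second (positive) part, since $\NN\in C([0,T])\subset L^2(0,T)$ it suffices to bound the Gagliardo seminorm $[\NN]_{\dot H^\theta(0,T)}^2$ for $0<\theta\leq1/2$. Fix $\delta>0$ small enough that $\I$ is decreasing on $(0,\delta)$ (by \eqref{derivat}) and that $\I(x)\leq Cx^{-1}\log^{-2}(1/x)$ and $\NN(x)\leq C\log^{-1}(1/x)$ hold there (by \eqref{eq:Iasympt}, \eqref{eq:Nasympt}). Using $[0,T]^2\subset[0,\delta]^2\cup[\delta/2,T]^2\cup\{(x,y):|x-y|>\delta/2\}$, the contributions of the last two sets are finite (the Lipschitz bound on $[\delta/2,T]$ for the middle set, and a bounded integrand over a bounded set for the last). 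For the remaining piece I would use the change of variable $y=x-t$ to write
\[
[\NN]_{\dot H^\theta(0,\delta)}^2=2\int_0^\delta\int_0^x\frac{\big(\NN(x)-\NN(x-t)\big)^2}{t^{1+2\theta}}\,dt\,dx,
\]
and split the inner integral at $t=x/2$. For $t\leq x/2$, the monotonicity of $\I$ gives $\NN(x)-\NN(x-t)=\int_{x-t}^x\I(s)\,ds\leq t\,\I(x-t)\leq t\,\I(x/2)\leq C\,t\,x^{-1}\log^{-2}(1/x)$; for $x/2<t<x$, simply $\NN(x)-\NN(x-t)\leq\NN(x)\leq C\log^{-1}(1/x)$. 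Performing the elementary $t$-integrals (using $0<\theta<1$, so that $\int_0^{x/2}t^{1-2\theta}\,dt\asymp x^{2-2\theta}$ and $\int_{x/2}^x t^{-1-2\theta}\,dt\asymp x^{-2\theta}$), both contributions are $\leq C\,x^{-2\theta}\log^{-2}(1/x)$. Hence $[\NN]_{\dot H^\theta(0,\delta)}^2\leq C\int_0^\delta x^{-2\theta}\log^{-2}(1/x)\,dx$, which is finite for every $\theta\in(0,1/2]$: for $\theta<1/2$ because $x^{-2\theta}$ is integrable near $0$ (and the logarithmic factor is bounded away from $0$), and for $\theta=1/2$ because the substitution $u=\log(1/x)$ turns the integral into $\int_{\log(1/\delta)}^\infty u^{-2}\,du<\infty$.

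The main obstacle is the borderline case $\theta=1/2$, where mere integrability of the kernel is not enough and the precise asymptotics $\NN(x)\sim1/\log(1/x)$ (equivalently $\I(x)\sim x^{-1}\log^{-2}(1/x)$) become essential: it is exactly the power $2$ on the logarithm that makes $\int_0 x^{-1}\log^{-2}(1/x)\,dx$ converge, whereas a first power would give divergence. Accordingly, the delicate point in the argument is to ensure that the monotonicity-based estimate for $\NN(x)-\NN(x-t)$ is sharp enough to reproduce the $\log^{-2}$ decay (rather than losing logarithmic powers), which is why the splitting at $t=x/2$ together with the pointwise bound $\I(x-t)\leq\I(x/2)$ is used in the near-diagonal regime.
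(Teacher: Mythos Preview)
Your proof is correct and shares the same overall architecture as the paper's: the negative part via the Sobolev embedding $H^\theta\hookrightarrow C^{0,\theta-1/2}$ together with the asymptotic \eqref{eq:Nasympt}, and the positive part via the Gagliardo seminorm split at the point $y=x/2$ (your $t=x/2$), with the far-from-diagonal piece controlled simply by $\NN(x)-\NN(y)\leq\NN(x)$.

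The genuine difference lies in the near-diagonal estimate. The paper applies Jensen's inequality to $\Big(\tfrac{1}{x-y}\int_y^x\I(s)\,ds\Big)^2\leq\tfrac{1}{x-y}\int_y^x\I^2(s)\,ds$ and then invokes the \emph{global convexity} of $\I$ (Lemma~\ref{lem:conv}, proved through the Ramanujan representation) to bound $\I^2(s)\leq\I^2(x)+\I^2(y)$ on $[y,x]$. You instead localize to a neighborhood of the origin where $\I$ is \emph{decreasing} (which follows directly from \eqref{derivat}) and use the sharper pointwise bound $\NN(x)-\NN(x-t)\leq t\,\I(x/2)$. This is more elementary: it bypasses Lemma~\ref{lem:conv} entirely and yields the estimate in one line. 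The trade-off is that you must first dispose of the region away from the origin by the Lipschitz argument, whereas the paper's convexity bound works globally on $[0,T]$ without any localization. A further minor difference is that you handle all $\theta\in(0,1/2]$ in a single computation, while the paper treats only the borderline $\theta=1/2$ and deduces the smaller indices from the inclusion $H^{1/2}(0,T)\subset H^\theta(0,T)$.
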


\begin{proof}
 The first part is immediate. In fact, if $\NN\in H^\theta(0,T)$, then it should be H\"older continuous in $[0,T]$ as well (see \cite{DD}). However, one can easily see that this is not the case by \eqref{eq:Nasympt}.
 
 On the other hand, one easily checks that $\NN\in L^2(0,T)$, so that it is left to prove that $[\NN]_{\dot{H}^{1/2}(0,T)}<\infty$ (since $H^\theta(0,T)\subset H^{1/2}(0,T)$ for all $\theta\in[0,1/2[\,$, by \cite{DiN}). An easy computation shows that
 \[
  [\NN]_{\dot{H}^{1/2}(0,T)}^2  = 2\int_0^T\int_0^{x/2}\left|\frac{\NN(x)-\NN(y)}{x-y}\right|^2dy\,dx+2\int_0^T\int_{x/2}^x\left|\frac{\NN(x)-\NN(y)}{x-y}\right|^2dy\,dx.                              
 \]
 Looking at the first integral and recalling that $\NN$ is increasing, we find
 \[
  \left|\frac{\NN(x)-\NN(y)}{x-y}\right|^2\leq4\,\frac{\NN^2(x)}{x^2},\quad\forall y\in(0,x/2).
 \]
 Hence, combining with \eqref{eq:Nasympt} and \eqref{eq:Iasympt},
 \[
  2\int_0^T\int_0^{x/2}\left|\frac{\NN(x)-\NN(y)}{x-y}\right|^2dy\,dx\leq c\int_0^T\frac{\NN^2(x)}{x}dx\sim c \int_0^T\I(x)dx<\infty.
 \]
 Concerning the second integral, Jensen inequality yields
 \begin{align*}
  \int_0^T\int_{x/2}^x\left|\frac{\NN(x)-\NN(y)}{x-y}\right|^2dy\,dx & = \int_0^T\int_{x/2}^x\left|\frac{1}{x-y}\int_y^x\I(s)ds\right|^2dy\,dx\\[.3cm]
                                                                     & \leq \int_0^T\int_{x/2}^x\frac{1}{x-y}\int_y^x\I^2(s)ds\,dy\,dx.
 \end{align*}
 Furthermore, since $\I$ is positive and convex by  Lemma \ref{lem:conv}, it is  $\I^2(s)\leq \I^2(x)+\I^2(y)$ for every $s\in[y,x]$, so that
 \[
  \int_0^T\int_{x/2}^x\frac{1}{x-y}\int_y^x\I^2(s)ds\,dy\,dx\leq \int_0^T\int_{x/2}^x(\I^2(y)+\I^2(x))dy\,dx.
 \]
 Now, noting that $\log^{-4}(1/y)\leq\log^{-4}(1/x)$ for all $y\in(x/2,x)$ and using again \eqref{eq:Iasympt},
 \[
  \int_0^T\int_{x/2}^x\I^2(y)dy\,dx \sim \int_0^T\int_{x/2}^x\frac{1}{y^2\log^4(\frac{1}{y})}dy\,dx \leq c\int_0^T\frac{1}{x\log^4(\frac{1}{x})}dx<\infty,
 \]
 whereas, on the other hand, 
 \[
  \int_0^T\int_{x/2}^x\I^2(x)dy\,dx\leq c \int_0^Tx\,\I^2(x)dx\sim c\int_0^T\frac{1}{x\log^4(\frac{1}{x})}dx<\infty.
 \]
 Thus
 \[
  \int_0^T\int_{x/2}^x\left|\frac{\NN(x)-\NN(y)}{x-y}\right|^2dy\,dx<\infty
 \]
 and, summing up, it is $[\NN]_{\dot{H}^{1/2}(0,T)}<\infty$, which concludes the proof.
\end{proof}

Therefore, we can claim the following result on the operator $I$ defined in \eqref{eq:op_I}.

\begin{teo}
\label{contr_lemma_2}
If $g\in H^{1/2}(0,T)\cap L^\infty(0,T)$, then
\begin{equation}
\label{eq:est_I_lim}
\|Ig\|_{H^{1/2}(0,T)} \leq c \cdot\max\{\|\NN\|_{H^{1/2}(0,T)},\NN(T)\} \left(\|g\|_{L^\infty(0,T)}+\|g\|_{H^{1/2}(0,T)}\right)
\end{equation}
(where $c>0$ is independent of $g$ and $T$). 
\end{teo}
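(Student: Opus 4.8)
Since at the critical exponent $\theta=1/2$ no ``extension to zero'' is available (cf.\ Lemma~\ref{lem-ext} and the subsequent remark), the Fourier--multiplier argument used in Theorem~\ref{contr_lemma} cannot be repeated. The plan is instead to estimate $\|Ig\|_{L^2(0,T)}$ and the Gagliardo seminorm $[Ig]_{\dot H^{1/2}(0,T)}$ directly. For the $L^2$ term one views $Ig$ as the causal convolution $\I*g$ and invokes Young's inequality (extending $\I$ and $g$ by zero to $\R$):
\[
\|Ig\|_{L^2(0,T)}\le\|\I\|_{L^1(0,T)}\,\|g\|_{L^2(0,T)}=\NN(T)\,\|g\|_{L^2(0,T)}\le\NN(T)\,\|g\|_{H^{1/2}(0,T)}.
\]

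For the seminorm, writing $(Ig)(x)=\int_0^x\I(u)g(x-u)\,du$ for all $x\in[0,T]$, for $0<y<x<T$ I would use the decomposition
\[
(Ig)(x)-(Ig)(y)=S(x,y)+R(x,y),\quad S(x,y):=\int_0^y\I(u)\big[g(x-u)-g(y-u)\big]\,du,\quad R(x,y):=\int_y^x\I(u)g(x-u)\,du.
\]
By symmetry of the Gagliardo integral and $|a+b|^2\le2|a|^2+2|b|^2$, one has $[Ig]_{\dot H^{1/2}(0,T)}^2\le4\int_0^T\!\!\int_0^x\frac{|S(x,y)|^2+|R(x,y)|^2}{(x-y)^2}\,dy\,dx$, and the two pieces are treated separately. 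For the $R$-piece, $|R(x,y)|\le\|g\|_{L^\infty(0,T)}\int_y^x\I(u)\,du=\|g\|_{L^\infty(0,T)}\big(\NN(x)-\NN(y)\big)$, whence
\[
\int_0^T\!\!\int_0^x\frac{|R(x,y)|^2}{(x-y)^2}\,dy\,dx\le\|g\|_{L^\infty(0,T)}^2\int_0^T\!\!\int_0^x\frac{|\NN(x)-\NN(y)|^2}{(x-y)^2}\,dy\,dx=\tfrac12\,\|g\|_{L^\infty(0,T)}^2\,[\NN]_{\dot H^{1/2}(0,T)}^2,
\]
which is \emph{finite precisely by Lemma~\ref{lem:N}}. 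This is the only point where the hypothesis $g\in L^\infty$ is used, and it is genuinely needed: since $\I\notin L^p_{\mathrm{loc}}$ for $p>1$, no H\"older bound for $R$ in $L^2$ is available.

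For the $S$-piece I would apply the Cauchy--Schwarz inequality in $u$ with respect to the finite measure $\I(u)\,du$ on $(0,y)$, which gives $|S(x,y)|^2\le\NN(T)\int_0^y\I(u)\,|g(x-u)-g(y-u)|^2\,du$; then, by Tonelli, bringing the $u$-integration outside and, for each fixed $u$, performing the change of variables $\xi=x-u$, $\eta=y-u$ (so that $x-y=\xi-\eta$ and $\xi,\eta$ stay in $(0,T)$), one obtains
\[
\int_0^T\!\!\int_0^x\frac{|S(x,y)|^2}{(x-y)^2}\,dy\,dx\le\NN(T)\int_0^T\I(u)\left(\int_u^{T}\!\!\int_y^{T}\frac{|g(x-u)-g(y-u)|^2}{(x-y)^2}\,dx\,dy\right)du\le\tfrac12\,\NN(T)^2\,[g]_{\dot H^{1/2}(0,T)}^2.
\]
Collecting the three estimates, and bounding $\NN(T)$ and $[\NN]_{\dot H^{1/2}(0,T)}\le\|\NN\|_{H^{1/2}(0,T)}$ by $\max\{\|\NN\|_{H^{1/2}(0,T)},\NN(T)\}$, and $\|g\|_{L^2(0,T)},\,[g]_{\dot H^{1/2}(0,T)},\,\|g\|_{L^\infty(0,T)}$ by $\|g\|_{L^\infty(0,T)}+\|g\|_{H^{1/2}(0,T)}$, yields \eqref{eq:est_I_lim} with an absolute constant $c$.

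The main obstacle is structural rather than computational: at $\theta=1/2$ one cannot reflect $g$ to an element of $H^{1/2}(\R)$, so the multiplier estimate $|\widehat{\nu_e}|\le\NN(T)$ exploited for $\theta\ne1/2$ is lost. The splitting $S+R$ is the substitute for it: $S$ is an averaged \emph{difference} of $g$ and is therefore absorbed by $[g]_{\dot H^{1/2}}$, whereas $R$ is essentially a weighted difference of $\NN$, so its contribution is finite only thanks to $\NN\in H^{1/2}(0,T)$ (Lemma~\ref{lem:N}) --- which is exactly why $\|\NN\|_{H^{1/2}(0,T)}$ necessarily appears on the right-hand side of \eqref{eq:est_I_lim}. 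The only genuinely delicate routine step is the Tonelli-plus-change-of-variables manipulation that collapses the triple integral in the $S$-estimate back to the Gagliardo seminorm of $g$.
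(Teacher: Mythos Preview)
Your argument is correct and follows essentially the same route as the paper: the same splitting $(Ig)(x)-(Ig)(y)=R+S$, the $L^\infty$ bound on $R$ leading to $[\NN]_{\dot H^{1/2}(0,T)}$ via Lemma~\ref{lem:N}, and the Cauchy--Schwarz/Jensen step on $S$ followed by Tonelli and the shift $\xi=x-u$, $\eta=y-u$ to recover $[g]_{\dot H^{1/2}(0,T)}$. The only cosmetic difference is that for the $L^2$ part you invoke Young's inequality directly, whereas the paper refers back to the $\theta=0$ case of Theorem~\ref{contr_lemma}; both give the same bound $\NN(T)\|g\|_{L^2(0,T)}$.
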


\begin{proof}
 Since $\theta=1/2$, \eqref{eq:stimaf} does not hold. However, defining $g_e$ as in \eqref{eq:ext}, $\|g_e\|_{L^2(\R)}=\sqrt{2}\,\|g\|_{L^2(0,T)}$ and hence, arguing as in the proof of Theorem \ref{contr_lemma} (with $\nu=\mathcal{I}$), one can check that
 \begin{equation}
  \label{eq:est_I_ldue}
  \|Ig\|_{L^2(0,T)} \leq c \cdot \NN(T) \|g\|_{L^2(0,T)}.
 \end{equation}
 Then, it is left to estimate $[Ig]_{\dot{H}^{1/2}(0,T)}$. First, we note that for every $0<y<x<T$
 \[
  (Ig)(x)-(Ig)(y)=\int_y^x\I(s)g(x-s)ds-\int_0^y\I(s)(g(x-s)-g(y-s))ds.
 \]
 Hence,
 \begin{align}
  \label{eq:Naux_1}
  [Ig]_{\dot{H}^{1/2}(0,T)}^2\leq & \, 4\int_0^T\int_0^x\left|\frac{1}{x-y}\int_y^x\I(s)g(x-s)ds\right|^2dy\,dx+\nonumber\\[.3cm]
                                  & \, 4\int_0^T\int_0^x\left|\int_0^y\I(s)\frac{g(x-s)-g(y-s)}{t-s}ds\right|^2dy\,dx.
 \end{align}
 Now, one can easily see that, since $g\in L^\infty(0,T)$,
 \begin{align}
  \label{eq:Naux_2}
  4\int_0^T\int_0^x\left|\frac{1}{x-y}\int_y^x\I(s)g(x-s)ds\right|^2dy\,dx+ & \leq 4\|g\|_{L^\infty(0,T)}^2\int_0^T\int_0^x\left|\frac{\NN(x)-\NN(y)}{x-y}\right|^2dy\,dx\nonumber\\[.3cm]
                                                                            & = 2\|g\|_{L^\infty(0,T)}^2[\NN]_{\dot{H}^{1/2}(0,T)}^2\nonumber\\[.3cm]
                                                                            & \leq 2\|g\|_{L^\infty(0,T)}^2\|\NN\|_{\dot{H}^{1/2}(0,T)}^2
 \end{align}
 (where $\|\NN\|_{\dot{H}^{1/2}(0,T)}$ is finite by Lemma \ref{lem:N}). On the other hand, by Jensen inequality and monotonicity of $\NN$,
 \[
  \begin{array}{l}
   \displaystyle 4\int_0^T\int_0^x\left|\int_0^y\I(s)\frac{g(x-s)-g(y-s)}{t-s}ds\right|^2dy\,dx\leq\\[.7cm]
   \hspace{4cm}\displaystyle \leq4\NN(T)\int_0^T\int_0^x\int_0^y\I(s)\left|\frac{g(x-s)-g(y-s)}{x-y}\right|^2ds\,dy\,dx.
  \end{array}
 \]
 From the Fubini Theorem and a change of variables
 \[
  \begin{array}{l}
   \displaystyle \int_0^T\int_0^x\int_0^y\I(s)\left|\frac{g(x-s)-g(y-s)}{x-y}\right|^2ds\,dy\,dx=\\[.7cm]
   \hspace{4cm}\displaystyle =\int_0^T\I(s)\int_0^{T-s}\int_0^x\left|\frac{g(x-s)-g(y-s)}{x-y}\right|^2dy\,dx\,ds\\[.7cm]
   \hspace{4cm}\displaystyle \leq [g]_{\dot{H}^{1/2}(0,T)}^2\NN(T).
  \end{array}
 \]
 Consequently, 
 \begin{equation}
  \label{eq:Naux_3}
  4\int_0^T\int_0^x\left|\int_0^y\I(s)\frac{g(x-s)-g(y-s)}{t-s}ds\right|^2dy\,dx\leq 4\NN^2(T)\|g\|_{\dot{H}^{1/2}(0,T)}^2
 \end{equation}
 and plugging \eqref{eq:Naux_3} and \eqref{eq:Naux_2} into \eqref{eq:Naux_1},
 \[
  [Ig]_{\dot{H}^{1/2}(0,T)}^2\leq c\cdot\max\{\|\NN\|_{\dot{H}^{1/2}(0,T)}^2,\NN^2(T)\}\left(\|g\|_{L^\infty(0,T)}^2+\|g\|_{\dot{H}^{1/2}(0,T)}^2\right).
 \]
 Finally, combining with \eqref{eq:est_I_ldue}, \eqref{eq:est_I_lim} follows.
\end{proof}

\begin{rem}
 Note that, again, the contractive effect is preserved since both $\NN(T)$ and $\|\NN\|_{\dot{H}^{1/2}(0,T)}$ converges to $0$, as $T\to0$.
\end{rem}

\begin{rem}
 It is also worth stressing that Theorem \ref{contr_lemma_2} holds as well for any positive and integrable kernel $\nu$ whose integral function $N\in H^{1/2}(0,T)$ (such as, for instance, Abel kernels). However, since this is a very specific assumption, we preferred to present it in the relevant case of the Volterra kernel, where $\NN\in H^{1/2}(0,T)$ can be clearly shown, leaving to the reader further generalizations. 
\end{rem}

Finally, we show that a version of Theorem \ref{contr_lemma} holds also in $W^{1,1}(0,T)$. This result could seem disconnected from the framework of our paper, but nevertheless it further clarifies some specific features of $J_\nu$ and, then, we mention it for the sake of completeness.

\begin{teo}
 \label{contr_lemma_3}
 If $\nu>0$ and $\nu\in L^1(0,T)$, then
 \begin{equation}
  \label{Wunouno}
  \|J_\nu g\|_{W^{1,1}(0,T)}\leq N(T)\left(|g(0)|+\|g\|_{W^{1,1}(0,T)}\right),\quad\forall g\in W^{1,1}(0,T).
 \end{equation}
\end{teo}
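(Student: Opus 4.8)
The plan is to reduce everything to the elementary convolution bound $\|J_\nu h\|_{L^1(0,T)}\le N(T)\|h\|_{L^1(0,T)}$ together with a Leibniz-type differentiation formula for $J_\nu g$. First I would pass to the symmetric form of the convolution, $J_\nu g(x)=\int_0^x\nu(s)\,g(x-s)\,ds$, and use that $g\in W^{1,1}(0,T)$ has an absolutely continuous representative, so that $g(x-s)=g(0)+\int_0^{x-s}g'(u)\,du$. Plugging this in and applying Fubini's theorem (legitimate since $\nu,g'\in L^1$) yields
\[
J_\nu g(x)=g(0)\,N(x)+\int_0^x N(x-u)\,g'(u)\,du .
\]
A second application of Fubini rewrites the last integral as $\int_0^x(J_\nu g')(s)\,ds$, so that
\[
J_\nu g(x)=\int_0^x\Big(g(0)\,\nu(s)+(J_\nu g')(s)\Big)\,ds,\qquad x\in[0,T],
\]
where $J_\nu g':=J_\nu(g')$ is well defined because $g'\in L^1(0,T)$. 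Since the integrand belongs to $L^1(0,T)$, this identity shows simultaneously that $J_\nu g\in W^{1,1}(0,T)$, that $(J_\nu g)(0)=0$, and that
\[
(J_\nu g)'(x)=g(0)\,\nu(x)+(J_\nu g')(x)\qquad\text{for a.e. }x\in(0,T).
\]

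Next I would establish the auxiliary bound: for every $h\in L^1(0,T)$, Fubini gives
\[
\|J_\nu h\|_{L^1(0,T)}\le\int_0^T|h(s)|\int_s^T\nu(x-s)\,dx\,ds=\int_0^T|h(s)|\,N(T-s)\,ds\le N(T)\,\|h\|_{L^1(0,T)},
\]
using that $N$ is increasing. Applying this with $h=g$ controls $\|J_\nu g\|_{L^1(0,T)}$ by $N(T)\|g\|_{L^1(0,T)}$; applying it with $h=g'$ controls $\|J_\nu g'\|_{L^1(0,T)}$ by $N(T)\|g'\|_{L^1(0,T)}$. Combining the latter with the derivative formula and with $\|\nu\|_{L^1(0,T)}=N(T)$,
\[
\|(J_\nu g)'\|_{L^1(0,T)}\le|g(0)|\,N(T)+N(T)\,\|g'\|_{L^1(0,T)} .
\]
Adding the two estimates gives
\[
\|J_\nu g\|_{W^{1,1}(0,T)}\le N(T)\big(|g(0)|+\|g\|_{L^1(0,T)}+\|g'\|_{L^1(0,T)}\big)=N(T)\big(|g(0)|+\|g\|_{W^{1,1}(0,T)}\big),
\]
which is \eqref{Wunouno}.

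The computations are all routine; the one point deserving care — rather than a genuine obstacle — is the rigorous justification that $J_\nu g$ is absolutely continuous with the claimed a.e. derivative. This is handled precisely by the representation $J_\nu g(x)=\int_0^x\big(g(0)\nu+J_\nu g'\big)$ produced by the two Fubini applications, which also makes transparent the observation recorded after Proposition~\ref{pro:i_identity} that $\frac{d}{dx}\big(\Phi(Ig)\big)(x)=g(x)$.
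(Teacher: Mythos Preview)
Your proof is correct and follows essentially the same approach as the paper: both arguments rest on the derivative identity $(J_\nu g)'(x)=g(0)\nu(x)+(J_\nu g')(x)$ together with the elementary $L^1$ convolution bound $\|J_\nu h\|_{L^1(0,T)}\le N(T)\|h\|_{L^1(0,T)}$. The paper obtains the $L^1$ bound by writing $J_\nu g$ as a convolution $\nu_T*g_T$ on $\R$ and quoting Young's inequality, while you do the equivalent Fubini computation by hand; the paper simply asserts the differentiation formula, whereas you derive it rigorously via the integral representation $J_\nu g(x)=\int_0^x\big(g(0)\nu+J_\nu g'\big)$, which is a welcome extra care but not a different idea.
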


\begin{proof}
 Recalling \eqref{J_nu} and arguing as in the proof of Theorem \ref{contr_lemma}, one finds that
 \[
  (J_\nu g)(x)=f(x)\quad\forall x\in[0,T],
 \]
 where
 \begin{equation}
  \label{eq-faux}
  f(x)=\int_\R\nu_T(x-s)g_T(s)\one_{[0,x]}(s)ds,
 \end{equation}
 with
 \[
  \nu_T(x):=\left\{
  \begin{array}{ll}
   \nu(x), & \text{if }x\in\;]0,T[\,,\\[.2cm]
   0,      & \text{if }x\in\R\backslash\;]0,T[\,,
  \end{array}
  \right.
  \quad\text{and}\quad
  g_T(x):=\left\{
  \begin{array}{ll}
   g(x), & \text{if }x\in\;]0,T[\,,\\[.2cm]
   0,    & \text{if }x\in\R\backslash\;]0,T[\,.
  \end{array}
  \right.
 \]
 Now, recalling that $\one_{[0,x]}(s)=H(s)-H(s-x)$ ($H$ denoting the Heaviside function) and that $\nu_T(s)H(-s)=0$, \eqref{eq-faux} reads
 \[
  f(x)=(\nu_T*g_T)(x).
 \]
 Then, by well known properties of the convolution product
 \[
  \|J_\nu g\|_{L^1(0,T)}\leq \|f\|_{L^1(\R)}\leq \|\nu_T\|_{L^1(\R)}\|g_T\|_{L^1(\R)}=N(T)\|g\|_{L^1(0,T)}.
 \]
 On the other hand, by \eqref{J_nu},
 \[
  (J_\nu g)(x)=\int_0^x\nu(s)g(x-s)ds
 \]
 and thus
 \[
  \frac{d}{dx}(J_\nu g)(x)=g(0)\nu(x)+\int_0^x\nu(x-s)\frac{d}{ds}g(s).
 \]
 Consequently, since $\frac{d}{ds}g\in L^1(0,T)$, arguing as before one finds \eqref{Wunouno}.
\end{proof}

\begin{rem}
 The proof of the previous theorem stresses a relevant difference between the cases $W^{1,1}$ and $H^1$, that arises in fact from the lack of further integrability (of ``power type'') of $\nu$. Indeed, if $\nu$ belongs only to $L^1(0,T)$ the additional assumption $g(0)=0$ clearly cannot be removed in $H^1$, whereas it is not necessary in $W^{1,1}$.
\end{rem}


\bigskip
\bigskip
\noindent

{\bf Acknowledgements.} R.C. and L.T. acknowledge the support of MIUR through the FIR grant 2013 ``Condensed Matter in Mathematical Physics (Cond-Math)'' (code RBFR13WAET). 


\def\cprime{$'$}

\end{document}